\theoremstyle{plain} 
\newtheorem{theorem}{Theorem}[section]
\newtheorem{proposition}[theorem]{Proposition}
\newtheorem{lemma}[theorem]{Lemma}
\theoremstyle{definition}
\theoremstyle{remark} 
\newtheorem{remark}[theorem]{Remark}
\definecolor{shadecolor}{rgb}{1,0.8,0.3}
\newcommand{\DETAIL}[1]{}
\newcommand{\IGNORE}[1]{}
\newcommand{\C}{{\mathscr{C}}}
\newcommand{\D}{{\mathscr{D}}}
\renewcommand{\H}{{\mathscr{H}}}
\renewcommand{\L}{{\mathscr{L}}}
\newcommand{\N}{{\mathbb{N}}}
\newcommand{\R}{{\mathbb{R}}}
\newcommand{\CB}{{\C_\mathrm{b}}}
\newcommand{\GS}{\geqslant}
\newcommand{\ID}{{\mathrm{id}}}
\newcommand{\LS}{\leqslant}
\newcommand{\SM}{{\mathscr{M}}}
\newcommand{\SP}{{\mathscr{P}}}
\newcommand{\DST}{\displaystyle}
\newcommand{\EPS}{\varepsilon}
\newcommand{\LEB}{{\mathcal{L}}}
\newcommand{\LIP}{{\mathrm{Lip}}}
\newcommand{\RHO}{\varrho}
\newcommand{\TAN}{{\mathbb{T}}}
\newcommand{\COPT}{{\mathcal{C}}}
\newcommand{\PROJ}[1]{\mathsf{P}_{#1}}
\newcommand{\WEAK}{\DOTSB\protect\relbar\protect\joinrel\rightharpoonup}
\DeclareMathOperator{\SPT}{{\mathrm{spt}}}
\numberwithin{equation}{section}
\title[The Pressureless Gas Dynamics Equations]{A Simple Proof of Global
Existence for the 1D Pressureless Gas Dynamics Equations}
\author
  {Fabio Cavalletti}
\address
  {Fabio Cavalletti,
   Lehrstuhl f\"{u}r Mathematik (Analysis),
   RWTH Aachen University,
   Templergraben 55,
   D-52062 Aachen, 
   Germany}
\email
  {cavalletti@instmath.rwth-aachen.de}
\author
  {Marc Sedjro}
\address
  {Marc Sedjro,
   Lehrstuhl f\"{u}r Mathematik (Analysis),
   RWTH Aachen University,
   Templergraben 55,
   D-52062 Aachen, 
   Germany}
\email
  {sedjro@instmath.rwth-aachen.de}
\author
  {Michael Westdickenberg}
\address
  {Michael Westdickenberg,
   Lehrstuhl f\"{u}r Mathematik (Analysis),
   RWTH Aachen University,
   Templergraben 55,
   D-52062 Aachen, 
   Germany}
\email
  {mwest@instmath.rwth-aachen.de}
\date{\today}
\subjclass[2000]{35L65, 49J40, 82C40}
\keywords{Pressureles Gas Dynamics, Optimal Transport}
\begin{document}

\begin{abstract}
Sticky particle solutions to the one-dimensional pressureless gas
dynamics equations can be constructed by a suitable metric projection
onto the cone of monotone maps, as was shown in recent work by Natile
and Savar\'{e}. Their proof uses a discrete particle approximation and
stability properties for first order differential inclusions. Here we
give a more direct proof that relies on a result by Haraux on the
differentiability of metric projections. We apply the same method also
to the one-dimensional Euler-Poisson system, obtaining a new proof for
the global existence of weak solutions.
\end{abstract}

\maketitle
\tableofcontents

%%%%%%%%%%%%%%%%%%%%%%%%%%%%%%%%%%%%%%%%%%%%%%%%%%%%%%%%%%%%%%%%%%%%%%%%%%%%%%%%
%%%%%%%%%%%%%%%%%%%%%%%%%%%%%%%%%%%%%%%%%%%%%%%%%%%%%%%%%%%%%%%%%%%%%%%%%%%%%%%%
%%%%%%%%%%%%%%%%%%%%%%%%%%%%%%%%%%%%%%%%%%%%%%%%%%%%%%%%%%%%%%%%%%%%%%%%%%%%%%%%
%%%%%%%%%%%%%%%%%%%%%%%%%%%%%%%%%%%%%%%%%%%%%%%%%%%%%%%%%%%%%%%%%%%%%%%%%%%%%%%%
%%%%%%%%%%%%%%%%%%%%%%%%%%%%%%%%%%%%%%%%%%%%%%%%%%%%%%%%%%%%%%%%%%% Introduction

\section{Introduction}\label{S:I}

\newcommand{\MM}{{\mathsf{m}}}

The one-dimensional pressureless gas dynamics equations
\begin{equation}
\label{E:PGDE}
  \left.\begin{aligned}
    \partial_t\RHO + \partial_x(\RHO v) &= 0
\\
    \partial_t(\RHO v) + \partial_x(\RHO v^2) &= 0
  \end{aligned}\right\}
  \quad\text{in $[0,\infty)\times\R$}
\end{equation}
describe the dynamics of a mass (or electric charge) distribution that
moves freely in the absence of any external or internal forces. The
quantity $\RHO$ is a positive Borel measure depending on time/space
$(t,x)$, while $v$ is the Eulerian velocity field. We will assume in
the following that $\RHO(t,\cdot)$ is a probability measure for all
times. This assumption is consistent with first equation in
\eqref{E:PGDE}, called the continuity equation, which describes the
local conservation of mass. The second equation in \eqref{E:PGDE}
models the local conservation of momentum. We are interested in the
Cauchy problem, so we assume that an initial density $\bar{\RHO}$
(which is a Borel probability measure) and an initial Eulerian
velocity $\bar{v} \in \L^2(\R,\bar{\RHO})$ (with finite kinetic
energy) are given.

System \eqref{E:PGDE} is a building block for semiconductor models.
Its multi-dimensional version has been proposed as a simple model
describing the formation of galaxies in the early stage of the
universe. Since there is no pressure, the fluid elements do not
interact with each other. For the applications, however, one typically
augments the system \eqref{E:PGDE} with the assumption of adhesion
(also called sticky particle) dynamics; see \cite{Zeldovich}: Whenever
any fluid elements meet at the same location, they stick together to
form larger compounds. The density $\RHO$ can therefore have singular
parts (Dirac measures). While both mass and momentum are conserved in
the collisions, kinetic energy may be lost. The assumption of adhesion
dynamics can be understood as an entropy condition for the hyperbolic
conservation law \eqref{E:PGDE}.

There is now a huge amount of literature studying the pressureless gas
dynamics equations \eqref{E:PGDE} and establishing global existence of
solutions. Frequently, a sequence of approximate solutions is
constructed by considering discrete particles: The initial mass
distribution is approximated by a finite sum of Dirac measures. The
dynamics of these particles are described by a finite dimensional
system of ordinary differential equations between collision times.
Whenever particles collide, then the new velocity of the bigger
particle is determined from the conservation of mass/momentum and the
assumption of the correct impact law. The general existence result is
obtained by sending the number of discrete particles to infinity. To
pass to the limit, several approaches are feasible. We only mention
two: Brenier and Grenier \cite{BrenierGrenier} consider the cumulative
distribution function $M$ associated to the density $\RHO$:
$$
  M(t,x) := \int_{-\infty}^x \RHO(t,dz)
  \quad\text{for all $(t,x)\in [0,\infty)\times\R$,}
$$
and show that $M$ is the unique entropy solution of a scalar
conservation law
$$
  \partial_t M + \partial_x A(M) = 0
  \quad\text{in $[0,\infty)\times\R$,}
$$
where the flux function $A \colon [0,1]\longrightarrow\R$ depends on
the initial density and Eulerian velocity. In particular, the function
$M$ satisfies Oleinik's entropy condition.

A second approach, introduced by Natile and Savar\'{e}
\cite{NatileSavare}, uses the theory of first-order differential
inclusions on the space of monotone transport maps from some reference
measure space $([0,1],\LEB^1|_{[0,1]}) =: (\Omega,\MM)$ (where
$\LEB^1$ is the one-dimensional Lebesgue measure) to $\R$. To every
density/velocity $(\RHO,v)$ that solves \eqref{E:PGDE} one can
associate a unique map $X \in \L^2(\Omega,\MM)$ with $X$ {\em
monotone} such that
\begin{equation}
\label{E:COMP}
  \RHO(t,\cdot) = X(t,\cdot) \# \MM
  \quad\text{for all $t\in[0,\infty)$.}
\end{equation}
Here $\#$ indicates the push-forward of measures. Then $X$ satisfies
\begin{equation}
\label{E:FODI}
  \dot{X} + \partial I_\COPT(X) \ni \bar{V}
  \quad\text{for all $t\in[0,\infty)$,}
\end{equation}
where $\COPT$ denotes the closed convex cone of all transport maps $X
\in \L^2(\Omega,\MM)$ that are monotone, and where $\partial I_\COPT$
is the subdifferential of the indicator function of $\COPT$. If $X$
satisfies \eqref{E:FODI} and is related to $\RHO$ through
\eqref{E:COMP}, then the Eulerian velocity $v$ can be recovered from
the Lagrangian velocity $V := \dot{X}$ through
\begin{equation}
\label{E:VELOS}
  V(t,\cdot) = v(t,X(t,\cdot))
  \quad\text{for all $t\in[0,\infty)$.}
\end{equation}
Assuming finite kinetic energy, it is natural to require that
$$
  V(t,\cdot) \in \L^2(\Omega,\MM),
  \quad
  v(t,\cdot) \in \L^2(\R,\RHO(t,\cdot)).
$$
The relation \eqref{E:VELOS} in particular determines the initial
Lagrangian velocity $\bar{V}$ in \eqref{E:FODI} in terms of the
initial data $(\RHO,v) (0,\cdot) =: (\bar{\RHO},\bar{v})$ of the
system \eqref{E:PGDE}.

Integrating \eqref{E:FODI} in time and using the fact that the
assumption of sticky particles implies that the subdifferential
$\partial I_\COPT(X(t,\cdot))$ is nondecreasing in time (as a subset
of $\L^2(\Omega,\MM)$), one can show that the solution of
\eqref{E:FODI} can be written as
\begin{equation}
\label{E:PROJ}
  X(t,\cdot) = P_\COPT( \bar{X}+t\bar{V} )
  \quad\text{for all $t\in[0,\infty)$,}
\end{equation}
with $\bar{X} := X(0,\cdot)$ specified by \eqref{E:COMP}; see
\cite{NatileSavare}. Here $P_\COPT$ denotes the metric projection onto
the cone $\COPT$. The connection between \eqref{E:PGDE} and
\eqref{E:FODI} makes it possible to apply classical results from the
theory of first-order differential inclusions in Hilbert spaces to
study the pressureless gas dynamics equations; see
\cites{NatileSavare, Brezis} for more.

In particular, one can use stability results for first-order
differential inclusions to prove that the discrete particle
approximation outlined above generates (a certain class of) solutions
for \eqref{E:PGDE}, as the number of particles converges to infinity.
Similar results can be obtained by applying the scalar conservation
law approach by Brenier and Grenier. We also refer the reader to
\cites{BouchutJames1, Grenier, ERykovSinai, PoupaudRascle,
BouchutJames2, HuangWang, Wolansky, Moutsinga, NguyenTudorascu,
GangboNguyenTudorascu, BrenierGangboSavareWestdickenberg}.

In this paper, we will show directly (that is, without passing through
a discrete particle approximation first) that the representation
\eqref{E:PROJ} generates solutions of the pressureless gas dynamics
equations. To this end, we need a good understanding of the derivative
of the map $t\mapsto X(t,\cdot)$, thus of the derivative of the metric
projection away from the boundary of $\COPT$. A classical result by
Haraux \cite{Haraux} gives a variational characterization of
$\dot{X}$, which in our case implies that $V$ is exactly of the form
\eqref{E:VELOS}, where $v$ is a suitable Eulerian velocity. Our
approach is therefore a bit simpler than the ones mentioned above,
which rely either on the theory of entropy solutions of scalar
conservation laws, or alternatively on the theory of first-order
differential inclusions. We will show how our method can be
generalized to the case when \eqref{E:PGDE} contains force terms. In
particular, we consider the Euler-Poisson system.

%%%%%%%%%%%%%%%%%%%%%%%%%%%%%%%%%%%%%%%%%%%%%%%%%%%%%%%%%%%%%%%%%%%%%%%%%%%%%%%%
%%%%%%%%%%%%%%%%%%%%%%%%%%%%%%%%%%%%%%%%%%%%%%%%%%%%%%%%%%%%%%%%%%%%%%%%%%%%%%%%
%%%%%%%%%%%%%%%%%%%%%%%%%%%%%%%%%%%%%%%%%%%%%%%%%%%%%%%%%%%%%%%%%%%%%%%%%%%%%%%%
%%%%%%%%%%%%%%%%%%%%%%%%%%%%%%%%%%%%%%%%%%%%%%%%%%%%%%%%%%%%%%%%%%%%%%%%%%%%%%%%
%%%%%%%%%%%%%%%%%%%%%%%%%%%%%%%%%%%%%%%% Differentiability of Metric Projections

\section{Differentiability of Metric Projections}\label{S:DOMP}

Let $\COPT$ be a closed convex subset in some Hilbert space $\H$,
equipped with scalar product $\langle\cdot, \cdot\rangle$ and induced
norm $\|\cdot\|$. For any $Y \in \H$ we will denote by
$\PROJ{\COPT}(Y)$ the metric projection of $Y$ onto the cone $\COPT$,
so that $\PROJ{\COPT}(Y)$ satisfies
$$
  \|Y-\PROJ{\COPT}(Y)\| = \inf\Big\{ \|Y-Z\| \colon Z\in\COPT \Big\}.
$$
It is well-known that the metric projection $\PROJ{\COPT}(Y)$ exists
and is uniquely determined for all $Y\in\H$. Moreover, it is
characterized by the following property:
\begin{equation}
\label{E:CHAR}
  Y^* = \PROJ{\COPT}(Y)
  \quad\Longleftrightarrow\quad
  \begin{cases}
    Y^* \in \COPT,
\\
    \langle Y-Y^*,Y^*-Z \rangle \GS 0 & \text{for all $Z\in\COPT$.}
  \end{cases}
\end{equation}
If $\COPT$ is a cone, then one can choose $Z=0$ and $Z=2Y^*$ in
\eqref{E:CHAR} to obtain
\begin{equation}
\label{E:PROJ2}
  \langle Y-Y^*,Y^* \rangle = 0,
  \quad
  \langle Y-Y^*,Z \rangle \LS 0
  \quad\text{for all $Z\in\COPT$.}
\end{equation}
It is also well-known that the metric projection is a contraction:
\begin{equation}
\label{E:CONTR}
  \|\PROJ{\COPT}(Y_1)-\PROJ{\COPT}(Y_2)\| \LS \|Y_1-Y_2\|
  \quad\text{for all $Y_1,Y_2\in\H$.}
\end{equation}
We refer the reader to \cite{Zarantonello} for further information on
metric projections.

For given $\bar{X}, \bar{V} \in \H$ we now consider the map $t \mapsto
X_t := \PROJ{\COPT}(\bar{X}+t\bar{V})$ for $t\in\R$, which is
well-defined and Lipschitz continuous because of \eqref{E:CONTR}. We
have
\begin{equation}
\label{E:LIPS}
  \|X_{t+h}-X_t\| \LS |h| \|\bar{V}\|
  \quad\text{for all $t,h\in\R$.}
\end{equation}
The velocity $V_t := \lim_{h\rightarrow 0} (X_{t+h}-X_t)/h$ exists
strongly for a.e.\ $t\in\R$ and satisfies the inequality $\|V_t\| \LS
\|\bar{V}\|$. For any $Z\in\COPT$ we define the tangent cone
\begin{equation}
\label{E:TN}
  \TAN_Z\COPT := \overline{T_Z\COPT},
  \quad
  T_Z\COPT := \bigcup_{h>0} h(\COPT-Z).
\end{equation}
Note that $T_Z\COPT$ is a convex set: if $h_1,h_2>0$ and
$Y_1,Y_2\in\COPT$ are given, then
$$
  (1-\lambda) h_1(Y_1-Z) + \lambda h_2(Y_2-Z)
    = h \Big( \big( (1-\mu) Y_1 + \mu Y_2 \big) - Z \Big)
    \in T_Z\COPT
$$
for all $\lambda\in[0,1]$, where $h := (1-\lambda)h_1+\lambda h_2 > 0$
and $\mu := \lambda h_2/h \in [0,1]$. For any $Y\in\H$ we denote by
$[Y]^\perp$ the orthogonal complement of $\R Y$. 
\medskip

We will rely on the following result:

%========== THEOREM
\begin{theorem}
\label{T:HARAUX}
For fixed $t\in\R$, let $V_t$ be any weak limit point of
$$
  V_t(h_n) := (X_{t+h_n}-X_t)/h_n
$$
as $h_n\rightarrow 0+$. Then $V_t \in \Sigma_{X_t}\COPT$ and $\langle
\bar{V}-V_t, V_t \rangle \GS 0$, where
\begin{equation}
\label{E:SIGMA}
  \Sigma_{X_t}\COPT := \TAN_{X_t}\COPT \cap [(\bar{X}+t\bar{V})-X_t]^\perp.
\end{equation}
Moreover, we have $\langle \bar{V}-V_t, W\rangle \LS 0$ for all $W \in
T_{X_t}\COPT \cap [(\bar{X}+t\bar{V})-X_t]^\perp$.
\end{theorem}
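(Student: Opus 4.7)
Let $Y_t := \bar{X} + t\bar{V}$, so that $X_t = \PROJ{\COPT}(Y_t)$. The plan is to apply the variational characterization \eqref{E:CHAR} at the two times $t$ and $t+h_n$, using the projections at the other time as admissible test elements, and then to pass to the weak limit along $h_n \to 0+$.

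For the membership $V_t \in \Sigma_{X_t}\COPT$: each quotient $V_t(h_n) = h_n^{-1}(X_{t+h_n} - X_t)$ lies in $T_{X_t}\COPT$ (since $X_{t+h_n} \in \COPT$ and $h_n > 0$), and convexity of $T_{X_t}\COPT$ ensures that its weak closure agrees with the norm closure $\TAN_{X_t}\COPT$, so the weak limit $V_t$ is there. For the orthogonality $V_t \perp (Y_t - X_t)$, I test \eqref{E:CHAR} for $X_t$ with $Z = X_{t+h_n}$ to get $\langle Y_t - X_t, V_t(h_n)\rangle \LS 0$, and test \eqref{E:CHAR} for $X_{t+h_n}$ with $Z = X_t$; after expanding $Y_{t+h_n} - X_{t+h_n} = (Y_t - X_t) + h_n(\bar{V} - V_t(h_n))$ and dividing by $h_n$, the latter reads
$$
  \langle Y_t - X_t, V_t(h_n)\rangle + h_n\bigl(\langle\bar{V}, V_t(h_n)\rangle - \|V_t(h_n)\|^2\bigr) \GS 0.
$$
Since $\|V_t(h_n)\| \LS \|\bar{V}\|$, the $h_n$-term is $O(h_n)$, so in the limit the two inequalities pinch $\langle Y_t - X_t, V_t\rangle = 0$.

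The inequality $\langle \bar{V} - V_t, V_t\rangle \GS 0$ follows from the standard monotonicity $\langle \PROJ{\COPT}(a) - \PROJ{\COPT}(b), a - b\rangle \GS \|\PROJ{\COPT}(a) - \PROJ{\COPT}(b)\|^2$ (an immediate consequence of \eqref{E:CHAR}) applied at $a = Y_{t+h_n}$, $b = Y_t$ and divided by $h_n^2$: this yields $\langle V_t(h_n), \bar{V}\rangle \GS \|V_t(h_n)\|^2$, and weak continuity of the linear term together with weak lower semicontinuity of the norm passes this to the limit.

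For the last statement, I write $W = h(Z - X_t)$ with $h > 0$ and $Z \in \COPT$ (which requires $W \in T_{X_t}\COPT$, not merely $\TAN_{X_t}\COPT$) and test \eqref{E:CHAR} for $X_{t+h_n}$ against this particular $Z$. Using $Y_{t+h_n} - X_{t+h_n} = (Y_t - X_t) + h_n(\bar{V} - V_t(h_n))$ and $X_{t+h_n} - Z = h_n V_t(h_n) - W/h$, the crucial orthogonality hypothesis $\langle Y_t - X_t, W\rangle = 0$ cancels an otherwise $O(1)$ contribution; after dividing by $h_n$ and letting $h_n \to 0+$ (with $\langle Y_t - X_t, V_t(h_n)\rangle \to 0$ from the previous step), I obtain $\langle \bar{V} - V_t, W\rangle \LS 0$. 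The main delicate point is precisely this cancellation: the orthogonality of $W$ must be used to eliminate the dominant term before any division by $h_n$, otherwise the limiting procedure would diverge.
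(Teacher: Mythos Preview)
Your argument is correct and follows essentially the same route as the paper: you test the variational characterization \eqref{E:CHAR} at times $t$ and $t+h_n$ against the opposite projection (and then against an admissible $Z$), expand, divide by the appropriate power of $h_n$, and pass to the weak limit using lower semicontinuity of the norm. The only cosmetic difference is that for the inequality $\langle \bar V - V_t, V_t\rangle \GS 0$ you invoke the firm nonexpansiveness $\langle \PROJ{\COPT}(a)-\PROJ{\COPT}(b), a-b\rangle \GS \|\PROJ{\COPT}(a)-\PROJ{\COPT}(b)\|^2$, whereas the paper obtains the same estimate by expanding $\langle Y_{t+h_n}-X_{t+h_n}, X_t-X_{t+h_n}\rangle \LS 0$ directly; these are of course the same computation.
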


%========== REMARK
\begin{remark}
Theorem~\ref{T:HARAUX} follows from Proposition~1 in \cite{Haraux}. We
include the short proof below for the reader's convenience. If the
cone $\COPT$ is {\em polyhedric}, so that
$$
  \overline{T_{X_t}\COPT \cap [(\bar{X}+t\bar{V})-X_t]^\perp} 
    = \Sigma_{X_t}\COPT
$$
for all $t\in\R$, then the map $t\mapsto X_t$ is strongly
right-differentiable and, denoting the right derivative again by
$V_t$, we have $V_t = \PROJ{\Sigma_{X_t}\COPT}(\bar{V})$ for all
$t\in\R$. Recall that the metric projection onto a closed convex set
is uniquely determined, so the weak limit points of $V_t(h_n)$ for all
sequences $h_n\rightarrow 0+$ have the same limit. In order to prove
the claim it is sufficient to observe that, by continuity, we have
$\langle \bar{V}-V_t, W \rangle \LS 0$ for all
$W\in\Sigma_{X_t}\COPT$. In particular, this implies that $\langle
\bar{V}-V_t, V_t \rangle = 0$.
\end{remark}

%========== PROOF
\begin{proof}[Proof of Theorem~\ref{T:HARAUX}]
The difference quotients $V_t(h)$ are uniformly bounded in $\H$ for
all $t,h\in\R$ (because of \eqref{E:LIPS}), and therefore weakly
precompact. Let $h_n \rightarrow 0+$ be any sequence such that the
weak limit $V_t := \lim_{n\rightarrow\infty} V_t(h_n)$ exists. Then
\begin{align*}
  0 & \GS \Big\langle \big( \bar{X}+(t+h_n)\bar{V} \big)-X_{t+h_n}, 
      X_t-X_{t+h_n} \Big\rangle
\\
    & = \Big\langle \big( (\bar{X}+t\bar{V})+h_n\bar{V} \big) 
      -\big( h_n V_t(h_n)+X_t \big), X_t-\big(h_n V_t(h_n)+X_t) \Big\rangle
\\
    & = -h_n \Big\langle (\bar{X}+t\bar{V})-X_t, V_t(h_n) \Big\rangle
        -h_n^2 \Big\langle \bar{V}-V_t(h_n), V_t(h_n) \Big\rangle;
\end{align*}
see \eqref{E:CHAR}. We used that $X_{t+h_n} = X_t+h_n V_t(h_n)$ and
$X_t\in\COPT$. This implies that
\begin{align}
  -h_n^2 \Big\langle \bar{V}-V_t(h_n),V_t(h_n) \Big\rangle
    & \LS h_n \Big\langle (\bar{X}+t\bar{V})-X_t, V_t(h_n) \Big\rangle
\label{E:TRI}\\ 
    & = \Big\langle (\bar{X}+t\bar{V})-X_t, X_{t+h_n}-X_t \Big\rangle \LS 0.
\nonumber
\end{align}
We now divide by $h_n^2>0$ and let $n\rightarrow\infty$ to obtain
$-\langle \bar{V}-V_t, V_t\rangle \LS 0$, using
$$
  \langle \bar{V},V_t(h_n) \rangle \longrightarrow \langle \bar{V}, V_t \rangle
  \quad\text{and}\quad
  \|V_t\|^2 \LS \liminf_{n\rightarrow\infty} \|V_t(h_n)\|^2.
$$
Dividing \eqref{E:TRI} by $h_n>0$ instead and letting
$n\rightarrow\infty$, we obtain
\begin{equation}
\label{E:ORTH}
  \langle (\bar{X}+t\bar{V})-X_t, V_t\rangle = 0
\end{equation}
because the $V_t(h_n)$ are uniformly bounded. On the other hand, since
$T_{X_t}\COPT$ (recall \eqref{E:TN}) is convex, its weak and strong
closures coincide. Then the weak limit
$$
  V_t = \lim_{n\rightarrow\infty}(X_{t+h_n}-X_t)/h_n
$$
satisfies $V_t \in \TAN_{X_t}\COPT$. It follows that $V_t \in
\Sigma_{X_t}\COPT$.

Consider now any $Z\in\COPT$ with the property that
\begin{equation}
\label{E:ORTH2}
  \langle (\bar{X}+t\bar{V})-X_t,Z-X_t \rangle = 0.
\end{equation}
Let $\delta_n := V_t(h_n)-V_t$ so that $\delta_n\WEAK 0$ as
$n\rightarrow\infty$. Then
\begin{align*}
  0 & \GS \Big\langle \big( \bar{X}+(t+h_n)\bar{V} \big)-X_{t+h_n},
      Z-X_{t+h_n} \Big\rangle
\\
    & = \Big\langle \big( (\bar{X}+t\bar{V})-X_t \big)
      + h_n(\bar{V}-V_t) - h_n\delta_n,
      (Z-X_t) - h_n V_t - h_n\delta_n \Big\rangle.
\end{align*}
Rearranging terms and dividing by $h_n>0$, we obtain
\begin{align*}
  & \langle \bar{V}-V_t, Z-X_t \rangle
\\
  & \qquad
    \LS -\big\langle (\bar{X}+t\bar{V})-X_t, Z-X_t \big\rangle / h_n
\\
  & \qquad\quad
    + \Big( \big\langle (\bar{X}+t\bar{V})-X_t, V_t \big\rangle
        + \big\langle (\bar{X}+t\bar{V})-X_t,\delta_n \big\rangle
        + \langle \delta_n, Z-X_t \rangle \Big)
\\
  & \qquad\quad
    + h_n \Big( \langle \bar{V}-V_t, V_t \rangle
        + \langle \bar{V}-V_t,\delta_n \rangle
        - \langle \delta_n,V_t \rangle
        - \|\delta_n\|^2 \Big).
\end{align*}
Passing to the limit on the right-hand side and using
\eqref{E:ORTH2}/\eqref{E:ORTH}, we obtain
$$
  \langle \bar{V}-V_t, Z-X_t \rangle \LS 0
  \quad\text{for all $Z\in\COPT$.}
$$
We observe now that for any $W \in T_{X_t}\COPT \cap
[(\bar{X}+t\bar{V})-X_t]^\perp$ there exist $h>0$ and $Z\in\COPT$ with
$W = h(Z-X_t)$ and \eqref{E:ORTH2}. Then $\langle \bar{V}-V_t, W
\rangle \LS 0$ follows.
\end{proof}

%%%%%%%%%%%%%%%%%%%%%%%%%%%%%%%%%%%%%%%%%%%%%%%%%%%%%%%%%%%%%%%%%%%%%%%%%%%%%%%%
%%%%%%%%%%%%%%%%%%%%%%%%%%%%%%%%%%%%%%%%%%%%%%%%%%%%%%%%%%%%%%%%%%%%%%%%%%%%%%%%
%%%%%%%%%%%%%%%%%%%%%%%%%%%%%%%%%%%%%%%%%%%%%%%%%%%%%%%%%%%%%%%%%%%%%%%%%%%%%%%%
%%%%%%%%%%%%%%%%%%%%%%%%%%%%%%%%%%%%%%%%%%%%%%%%%%%%%%%%%%%%%%%%%%%%%%%%%%%%%%%%
%%%%%%%%%%%%%%%%%%%%%%%%%%%%%%%%%%%%%%%%%%%%%%%%%%%%%% Sticky Particle Solutions

\section{Sticky Particle Solutions}\label{S:SPS}

We now apply the result of the previous section to the following
situation: the Hilbert space $\H = \L^2(\R,\mu)$ for some reference
measure $\mu\in\SP_2(\R)$, where $\SP_2(\R)$ denotes the space of all
Borel probability measures with finite second moment (thus $\int_\R
|x|^2 \,\mu(dx) < \infty$). The inner product
$\langle\cdot,\cdot\rangle$ and the norm $\|\cdot\|$ are the
$\L^2(\R,\mu)$-inner product and -norm, respectively. The cone is
defined as
\begin{equation}
\label{E:COPT}
  \COPT := \big\{ X\in\L^2(\R,\mu) \colon \text{$X$ is monotone} \big\}.
\end{equation}
We say that $X$ is monotone if the support of the induced transport
plan
\begin{equation}
\label{E:GAMMAX}
  \gamma_X := (\ID,X)\#\mu
\end{equation}
is a monotone set in $\R\times\R$, where $\#$ denotes the push-forward
of measures. This definition is motivated by the theory of optimal
transport, where a transport plan (a probability measure of the
product space) is optimal if its support is contained in the
subdifferential of a convex function (see \cite{AmbrosioGigliSavare}),
which is monotone. Recall that a subset $\Gamma \subset \R\times\R$ is
monotone if for any $(m_i,x_i) \in \Gamma$, $i=1..2$, we have
$$
  (m_1-m_2)(x_1-x_2) \GS 0.
$$
For a Borel measure $\nu$ defined on some topological space $\Omega$,
we say $z\in\SPT\nu$ if
$$
  \nu(N)>0 
  \quad\text{for every open neighborhood $N$ of $z$.}
$$
Consequently, a point $(m,x)\in\R^d\times\R^d$ belongs to the support
$\SPT\gamma_X$ if
\begin{equation}
\label{E:SPA}
  \gamma_X\big( B_\delta(m)\times B_\delta(x) \big) > 0
  \quad\text{for all $\delta>0$.}
\end{equation}
%

%========== LEMMA
\begin{lemma}\label{L:SUPPORT}
For given reference measure $\mu\in\SP_2(\R)$ and $X\in\COPT$ we
define $\gamma_X$ as in \eqref{E:GAMMAX}. Then there exists a Borel
set $N_X \subset \R$ with $\mu(N_X) = 0$ such that
\begin{equation}
\label{E:SPP}
  (m,X(m)) \in \SPT\gamma_X
  \quad\text{for all $m\in\R\setminus N_X$.}
\end{equation}
\end{lemma}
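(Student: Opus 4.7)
The plan is to exploit the general measure-theoretic fact that any Borel probability measure on a second-countable space is concentrated on its support, combined with the push-forward definition $\gamma_X=(\ID,X)\#\mu$ in \eqref{E:GAMMAX}.

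First, I would fix a Borel measurable representative of $X$ (any $\mu$-measurable real-valued function on $\R$ agrees $\mu$-almost everywhere with a Borel one); the particular representative will influence $N_X$, but not the null-set property. With this choice the map $F\colon\R\to\R\times\R$ defined by $F(m):=(m,X(m))$ is Borel measurable, and $\gamma_X=F\#\mu$ is an honest Borel probability measure on the plane.

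Next, I would verify that $\gamma_X(U)=0$ for the open set $U:=(\R\times\R)\setminus\SPT\gamma_X$. By the definition of support recalled just before \eqref{E:SPA}, every point $(m,x)\in U$ admits an open neighborhood $N_{(m,x)}\subset U$ with $\gamma_X(N_{(m,x)})=0$. Since $\R\times\R$ is second-countable (hence Lindel\"of), the cover $\{N_{(m,x)}\}_{(m,x)\in U}$ admits a countable subcover, so countable subadditivity of $\gamma_X$ yields $\gamma_X(U)=0$.

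Finally, I would set $N_X:=F^{-1}(U)$. Since $F$ is Borel and $U$ is open, $N_X$ is a Borel subset of $\R$, and the push-forward identity gives
\[
  \mu(N_X)=\mu\bigl(F^{-1}(U)\bigr)=(F\#\mu)(U)=\gamma_X(U)=0.
\]
For every $m\in\R\setminus N_X$ we have $(m,X(m))=F(m)\in\SPT\gamma_X$, which is exactly \eqref{E:SPP}. The statement is really a soft measure-theoretic consequence of the push-forward construction and offers no substantial obstacle; the only points requiring a bit of care are the initial selection of a Borel representative of $X$ (so that $N_X$ comes out Borel rather than merely $\mu$-measurable) and the Lindel\"of argument ensuring $\gamma_X(U)=0$.
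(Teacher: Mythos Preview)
Your proof is correct and follows essentially the same route as the paper: both arguments show that $\gamma_X\big((\R\times\R)\setminus\SPT\gamma_X\big)=0$ and then pull this back through the push-forward $(\ID,X)\#\mu$ to obtain the $\mu$-null set $N_X$. The only cosmetic difference is that the paper establishes the null-mass of the complement of the support via inner regularity (approximating by compact sets and extracting finite subcovers), whereas you use second-countability/Lindel\"of and countable subadditivity directly; your extra remark on fixing a Borel representative of $X$ is a welcome clarification that the paper leaves implicit.
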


%========== PROOF
\begin{proof}
It is known that every finite Borel measure $\nu$ on a locally compact
Hausdorff space $\Omega$ with countable basis is inner regular; see
e.g.\ \cite{Folland} for more details. It follows that $\nu(U) = 0$,
where $U := \Omega\setminus \SPT\nu$. Indeed we have
\begin{equation}
\label{E:EINS}
  \nu(U) = \sup\Big\{ \nu(K) \colon \text{$K\subset U$ compact} \Big\},
\end{equation}
by inner regularity of $\nu$. For any $x\in U$ there exists an open
neighborhood $N_x$ of $x$ with $\nu(N_x) = 0$, as follows from the
definition of $\SPT\nu$. The family $\{ N_x \}_{x\in U}$ is an open
covering of the compact set $K$, and so a finite subcovering exists:
Let $V\subset U$ be a finite set such that $K \subset \bigcup_{x\in V}
N_x$. Then we can estimate
\begin{equation}
\label{E:ZWEI}
  \nu(K) \LS \sum_{x \in V} \nu(N_x) = 0.
\end{equation}
Combining this inequality with \eqref{E:EINS}, we obtain that
$\nu(U)=0$.

Applying this observation to transport plans, we obtain \eqref{E:SPP}.
Just note that
\begin{align*}
  \mu\Big( \big\{ m\in\R \colon (m,X(m))\not\in\SPT\gamma_X \big\} \Big)
    &= \mu\Big( (\ID\times X)^{-1}
      \big( (\R\times\R)\setminus\SPT\gamma_X \big) \Big)
\\
    &= \gamma_X\Big( (\R\times\R)\setminus\SPT\gamma_X \Big) = 0,
\end{align*}
by the definition of push-forward of measures.
\end{proof}

%========== LEMMA
\begin{lemma}\label{L:SMOOTH}
The set $\COPT$ defined in \eqref{E:COPT} is a closed convex cone in
$\L^2(\R,\mu)$. For any $X\in\COPT$ and any smooth, strictly
increasing function $\zeta \colon \R\longrightarrow\R$ that coincides
with the identity map outside a compact set, we have $\zeta\circ X \in
\COPT$.
\end{lemma}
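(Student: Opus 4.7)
The plan is to break the lemma into four elementary pieces---cone, convex, closed, and the composition claim---all resting on a single bridge: via Lemma~\ref{L:SUPPORT}, the abstract requirement that $\SPT\gamma_X$ be monotone is equivalent to $X$ being pointwise nondecreasing on $\R\setminus N_X$ for some $\mu$-null set $N_X$. The forward direction is immediate from Lemma~\ref{L:SUPPORT} combined with monotonicity of the support. The reverse direction is a density argument: for any two points $(a,b),(a',b')\in\SPT\gamma_X$ condition \eqref{E:SPA} forces the existence of sequences $m_k\to a$, $m_k'\to a'$ lying outside $N_X$ with $X(m_k)\to b$, $X(m_k')\to b'$, so pointwise monotonicity at the $m_k,m_k'$ passes to $(a,b),(a',b')$ in the limit.

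With this translation in hand, the first three properties become routine. The cone property is immediate because $(m,x)\mapsto(m,\lambda x)$ pushes $\gamma_X$ to $\gamma_{\lambda X}$ and preserves monotone subsets of $\R\times\R$ for $\lambda\GS 0$. For convexity, given $X_1,X_2\in\COPT$ and $t\in[0,1]$, the bridge produces $\mu$-null sets $N_1,N_2$ with each $X_i$ pointwise nondecreasing on $\R\setminus N_i$; hence $(1-t)X_1+tX_2$ is pointwise nondecreasing on the conull set $\R\setminus(N_1\cup N_2)$, and the reverse direction of the bridge gives membership in $\COPT$. For closedness, if $X_n\in\COPT$ converges to $X$ in $\L^2(\R,\mu)$, I would extract a subsequence converging $\mu$-a.e.\ on some conull set $C$; each $X_n$ is pointwise nondecreasing on a conull set $A_n$, and on the countable intersection $C\cap\bigcap_n A_n$ (still conull) pointwise monotonicity passes to the limit.

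For the composition claim, note first that $\zeta-\ID$ is bounded (being continuous and compactly supported), so $|\zeta\circ X|\LS|X|+C$ puts $\zeta\circ X$ in $\L^2(\R,\mu)$. For monotonicity, the map $F\colon(m,x)\mapsto(m,\zeta(x))$ is a homeomorphism of $\R\times\R$ because $\zeta$ is a smooth strictly increasing bijection of $\R$; hence $\gamma_{\zeta\circ X}=F_\#\gamma_X$ and $\SPT\gamma_{\zeta\circ X}=F(\SPT\gamma_X)$, and strict monotonicity of $\zeta$ ensures that $F$ carries monotone sets to monotone sets. The only real difficulty throughout is the translation between the support-based definition of monotonicity and its pointwise a.e.\ counterpart; once Lemma~\ref{L:SUPPORT} does that work, every remaining step is either linear algebra, a subsequence extraction, or a direct calculation with push-forwards.
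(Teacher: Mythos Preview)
Your proof is correct, and the overall strategy---reducing the support-based definition of $\COPT$ to a pointwise-a.e.\ monotonicity condition via Lemma~\ref{L:SUPPORT}---matches the spirit of the paper's argument. The main organizational difference is that you isolate this reduction once as a two-way ``bridge'' and then invoke it uniformly, whereas the paper inlines the reverse direction each time it is needed (most visibly in the convexity step, where the paper's $\delta$-argument with points $m_i\in B_\delta(\bar m_i)\setminus(N^1\cup N^2)$ is exactly your reverse-bridge computation, just not abstracted).

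The one place your route genuinely diverges is closedness: the paper argues via narrow convergence of the plans $\gamma_{X^k}$ and Kuratowski convergence of their supports (citing results from \cite{AmbrosioGigliSavare}), whereas you extract an a.e.\ convergent subsequence and pass pointwise monotonicity to the limit on a countable intersection of conull sets. Your argument is more elementary and self-contained; the paper's is perhaps more conceptual from the optimal-transport viewpoint. For the composition claim the two proofs are essentially the same: both establish $\SPT\gamma_{\zeta\circ X}=F(\SPT\gamma_X)$ for $F(m,x)=(m,\zeta(x))$, the paper by a direct ball computation and you by noting that $F$ is a homeomorphism (which is legitimate since a strictly increasing continuous bijection of $\R$ has continuous inverse).
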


%========== PROOF
\begin{proof}
We proceed in two steps.
\medskip

{\bf Step~1.} Consider a sequence of $X^k\in\COPT$ converging strongly
to $X \in \L^2(\R,\mu)$. The sequence of transport plans
$\gamma_{X^k}$, as defined in \eqref{E:GAMMAX}, converges narrowly to
$\gamma_X$; see Lemma~5.4.1 in \cite{AmbrosioGigliSavare}. Narrow
convergence implies convergence in the Kuratowski sense (see
Proposition~5.1.8 in \cite{AmbrosioGigliSavare}): for any pair of
points $(\bar{m}_i,\bar{x}_i) \in \SPT\gamma_X$, $i=1..2$, there exist
sequences of $(m_i^k,x_i^k) \in \SPT\gamma_{X^k}$ such that
$$
  (m_i^k,x_i^k) \longrightarrow (\bar{m}_i,\bar{x}_i)
  \quad\text{as $k\rightarrow\infty$,}
$$
for $i=1..2$. Since $\SPT_{X^k}$ is monotone, by definition of
$\COPT$, we have
$$
  (\bar{m}_1-\bar{m}_2)(\bar{x}_1-\bar{x}_2) 
    = \lim_{k\rightarrow\infty} (m_1^k-m_2^k)(x_1^k-x_2^k)
    \GS 0.
$$
This implies that also $\SPT\gamma_X$ is monotone and $X\in\COPT$.
Therefore $\COPT$ is closed.

Let now $X^1, X^2 \in \COPT$ and $s\in[0,1]$ be given and define $X_s
:= (1-s)X^1 + sX^2$, which is an element in $\L^2(\R,\mu)$. Let
$\gamma_{X_s}$ be corresponding transport plan defined by
\eqref{E:GAMMAX}, and let $N^k$ be the $\mu$-null sets in
Lemma~\ref{L:SUPPORT} corresponding to $X^k$, $k=1..2$. Pick any pair
of points $(\bar{m}_i,\bar{x}_i) \in \SPT\gamma_{X_s}$, $i=1..2$. For
any $\delta>0$ there exist
$$
  m_i \in B_\delta(\bar{m}_i)\setminus(N^1\cup N^2)
$$
such that $X_s(m_i) \in B_\delta(\bar{x}_i)$. Indeed assume that not.
Then
\begin{align*}
  0 &= \mu\Big( \big\{ m\in B_\delta(\bar{m}_i)\setminus(N^1\cup N^2) \colon
     X_s(m) \in B_\delta(\bar{x}_i) \big\} \Big)
\\
  & = \mu\Big( \big\{ m\in B_\delta(\bar{m}_i) \colon
     X_s(m) \in B_\delta(\bar{x}_i) \big\} \Big)
\\
  & \vphantom {\Big(}
    = \gamma_{X_s}\big( B_\delta(\bar{m}_i) \times B_\delta(\bar{x}_i) \big),
\end{align*}
which would be a contradiction to our choice of $(\bar{m}_i,\bar{x}_i)
\in \SPT\gamma_{X_s}$. We get that
\begin{align}
  (\bar{m}_1-\bar{m}_2) (\bar{x}_1-\bar{x}_2)
    & \GS (m_1-m_2)\big( X_s(m_1)-X_s(m_2) \big)
\label{E:COM}\\
    & -2\delta\big( |\bar{m}_1|+|\bar{m}_2|+|\bar{x}_1|+|\bar{x}_2| \big)
      -4\delta^2.
\nonumber
\end{align}
But since $(m_i,X^k(m_i)) \in \SPT\gamma_{X^i}$ and $X^k\in\COPT$, and
by definition of $X_s$, it follows that the first term on the
right-hand side of \eqref{E:COM} is nonnegative. Recall that
$\delta>0$ was arbitrary. Therefore the left-hand side of
\eqref{E:COM} is nonnegative as well. Since this construction works
for any $(\bar{m}_i,\bar{x}_i) \in \SPT\gamma_{X_s}$, $i=1..2$, we
have that $\SPT\gamma_{X_s}$ is a monotone set, and thus $X_s \in
\COPT$ for any $s\in[0,1]$. Hence $\COPT$ is convex.

The proof that $\COPT$ is a cone is similar and omitted. 
\medskip

{\bf Step~2.} Consider now a smooth function $\zeta$ as above and
$X\in\COPT$. Choose $M>0$ such that $\zeta(x) = x$ for all $|x|\GS M$
and let $Y := \zeta\circ X$. Then
\begin{align*}
  \int_\R |Y|^2 \,\mu 
    &= \int_{\{|X|<M\}} |Y|^2 \,\mu + \int_{\{|X|\GS M\}} |Y|^2 \,\mu
\\
    &\LS \|\zeta\|^2_{\L^\infty([-M,M])} + \int_\R |X|^2 \,\mu,
\end{align*}
which is finite, and so $Y \in \L^2(\R,\mu)$. With $\gamma_X$ and
$\gamma_Y$ defined as in \eqref{E:GAMMAX}, we must show that
$\SPT\gamma_Y$ is a monotone subset of $\R\times\R$. We first claim
that
$$
  \SPT\gamma_Y = F(\SPT\gamma_X),
  \quad \text{where $F(m,x) := (m,\zeta(x))$}
$$
for all $(m,x)\in\R\times\R$. Notice that $\zeta$ is invertible.
Consider any $(\bar{m}, \bar{y}) \in F(\SPT\gamma_X)$ and let $\bar{x}
:= \zeta^{-1} (\bar{y})$, so that $(\bar{m},\bar{x}) \in
\SPT\gamma_X$. For $\delta>0$ arbitrary we have
$$
  \gamma_Y\big( B_\delta(\bar{m})\times B_\delta(\bar{y})\big)
    = \mu\Big( \big\{ m\in B_\delta(\bar{m}) \colon 
      Y(m) \in B_\delta(\bar{y}) \big\} \Big),
$$
by definition of $\gamma_Y$. Since $\zeta$ is smooth, there exists
$\EPS>0$ with $B_\EPS(\bar{x}) \subset \zeta^{-1}(B_\delta(\bar{y}))$.
Without loss of generality, we may assume that $\EPS\LS\delta$. Then
\begin{align*}
  \gamma_Y\big( B_\delta(\bar{m})\times B_\delta(\bar{y})\big)
     &\GS \mu\Big( \big\{ m\in B_\EPS(\bar{m}) \colon 
      X(m) \in B_\EPS(\bar{x}) \big\} \Big)
\\
     &= \gamma_X\big( B_\EPS(\bar{m})\times B_\EPS(\bar{x})\big),
\end{align*}
which is positive since $(\bar{m},\bar{x}) \in \SPT\gamma_X$; see
\eqref{E:SPA}. It follows that $F(\SPT\gamma_X) \subset \SPT\gamma_Y$.
For the converse direction, we argue analogously, noting that $F$ is
invertible with $F^{-1}(m,y) := (m,\zeta^{-1}(y))$ for all $(m,y) \in
\R\times\R$. Consider now $(m_i,y_i) \in \SPT\gamma_Y$, $i=1..2$. Then
$(m_i,x_i) \in \SPT\gamma_X$ with $x_i:=\zeta^{-1}(y_i)$. Assume
$x_1\neq x_2$. Then
$$
  (y_1-y_2)(m_1-m_2)
    = \frac{\zeta(x_1)-\zeta(x_2)}{x_1-x_2} \cdot (x_1-x_2)(m_1-m_2) 
    \GS 0.
$$
Indeed the first factor is positive since $\zeta$ is strictly
increasing, and the second factor is nonnegative since $\SPT\gamma_X$
is a monotone set. We conclude that $Y\in\COPT$.
\end{proof}

We are going to prove the following result; see \cite{NatileSavare}.

%========== THEOREM
\begin{theorem}[Global Existence]\label{T:PGD}
Let initial data $\bar{\RHO}\in\SP_2(\R)$ and $\bar{v} \in
\L^2(\R,\bar{\RHO})$ of the pressureless gas dynamics equations
\eqref{E:PGDE} be given. For some reference measure $\mu \in
\SP_2(\R)$, let $\bar{X} \in \COPT$ be the unique monotone transport
with $\bar{X}\#\mu = \bar{\RHO}$. Let
\begin{equation}
\label{E:PRF}
  \bar{V} := \bar{v}\circ\bar{X},
  \quad
  X_t := \PROJ{\COPT}(\bar{X}+t\bar{V})
  \quad\text{for all $t\in\R$.}
\end{equation}
Then $X_t$ is differentiable for a.e.\ $t\in\R$ and $V_t := \dot{X}_t$
can be written in the following form: there exists a velocity $v_t \in
\L^2(\R,\RHO_t)$ with $\RHO_t := X_t\#\mu$, such that $V_t = v_t \circ
X_t$. The pair $(\RHO_t, v_t)$ is a weak solution of the conservation
law \eqref{E:PGDE}.
\end{theorem}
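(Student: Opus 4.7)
The contraction property~\eqref{E:LIPS} shows that $t\mapsto X_t$ is Lipschitz into the separable Hilbert space $\L^2(\R,\mu)$ with constant $\|\bar V\|$, so it is strongly differentiable at a.e.\ $t$; at such $t$ Theorem~\ref{T:HARAUX} gives $V_t:=\dot X_t\in\Sigma_{X_t}\COPT$ with $\langle V_t,f\rangle=0$, where $f:=(\bar X+t\bar V)-X_t$, together with $\langle\bar V-V_t,W\rangle\LS 0$ for every $W\in T_{X_t}\COPT\cap[f]^\perp$. For $\psi\in C_c^\infty(\R)$ and small $|h|$, Lemma~\ref{L:SMOOTH} applied to $\zeta:=\ID+h\psi$ shows that $\zeta\circ X_t\in\COPT$, hence $\pm\psi\circ X_t\in T_{X_t}\COPT$; the projection characterization~\eqref{E:CHAR} forces $\langle f,\psi\circ X_t\rangle=0$ and then Theorem~\ref{T:HARAUX} gives $\langle\bar V-V_t,\psi\circ X_t\rangle=0$. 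Extending by density of $C_c^\infty$ in $\L^2(\R,\RHO_t)$ to $\sigma(X_t)$-measurable test functions yields
\begin{equation*}
  \E_\mu[f\mid X_t]=0
  \qquad\text{and}\qquad
  \E_\mu[\bar V-V_t\mid X_t]=0,
\end{equation*}
so $f$ vanishes off the flats of $X_t$ and has zero mean on every flat.

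\textbf{Eulerian form.} Write $V_t=V_t^E+V_t^\perp$ with $V_t^E:=\E_\mu[V_t\mid X_t]$. On a maximal flat $C_i=(a_i,b_i)$ of $X_t$, each difference quotient $h_n^{-1}(X_{t+h_n}-X_t)|_{C_i}$ is monotone nondecreasing (modulo an additive constant), so by $\L^2$-closure of the monotone cone $V_t|_{C_i}$ and hence $V_t^\perp|_{C_i}$ is monotone nondecreasing with mean zero. Testing $\langle f,W\rangle\LS 0$ with $W\equiv 1$ and $W=\mathbf{1}_{[s,\infty)}$ (both clearly in $T_{X_t}\COPT$) gives $\int_\R f\,d\mu=0$ and $\int_s^\infty f\,dm\LS 0$; together with $f=0$ off the flats, this forces $\Phi_i(s):=\int_{a_i}^s f\,dm\GS 0$ on $C_i$ with $\Phi_i(a_i)=\Phi_i(b_i)=0$. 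The global orthogonality $\langle V_t,f\rangle=0$ reduces, via $\E_\mu[V_t^E f]=0$, to $\langle V_t^\perp,f\rangle=0$; integrating by parts flat by flat gives
\begin{equation*}
  0=\sum_i\int_{C_i}V_t^\perp f\,dm = -\sum_i\int_{C_i}\Phi_i\,dV_t^\perp,
\end{equation*}
each summand nonnegative, so each vanishes and $dV_t^\perp$ is supported on $\{\Phi_i=0\}$. Outside a null set of $t$ (those at which $\bar X+t\bar V$ happens to be constant on a subinterval of some flat), $\Phi_i$ is strictly positive in the interior of each $C_i$, which forces the monotone mean-zero function $V_t^\perp$ to vanish. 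Hence $V_t=v_t\circ X_t$ for some $v_t\in\L^2(\R,\RHO_t)$.

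\textbf{Weak PDE and main obstacle.} For any $\phi\in C_c^\infty([0,\infty)\times\R)$, the identity $\int\phi(t,x)\,\RHO_t(dx)=\int\phi(t,X_t)\,d\mu$ combined with $V_t=v_t\circ X_t$ gives the weak continuity equation upon differentiating in $t$. For momentum, the conditional-expectation identity $V_t=\E_\mu[\bar V\mid X_t]$ produces
\begin{equation*}
  \int\phi(t,x)\,v_t(x)\,\RHO_t(dx) = \int\phi(t,X_t)\,V_t\,d\mu = \int\phi(t,X_t)\,\bar V\,d\mu,
\end{equation*}
and differentiating in $t$ gives $\int\phi_t(t,X_t)\bar V\,d\mu+\int\phi_x(t,X_t)V_t\bar V\,d\mu$; since $\phi_x(t,X_t)V_t$ is $\sigma(X_t)$-measurable, one last application of the same identity replaces $\bar V$ by $V_t$ in the second term, producing $\int\phi_x(t,x)v_t(x)^2\,\RHO_t(dx)$ and the weak momentum equation after integration in time. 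The hardest step is the Eulerian form: Theorem~\ref{T:HARAUX} alone only constrains $V_t$ modulo a mean-zero monotone perturbation on each flat, and eliminating this perturbation requires the one-sided positivity $\Phi_i\GS 0$ of the primitive of the normal-cone multiplier, which is where the genuinely one-dimensional geometry of $\COPT$ enters.
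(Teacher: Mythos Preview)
Your handling of the continuity and momentum equations (the ``Weak PDE'' paragraph) matches the paper, and the testing with $\psi\circ X_t$ to obtain $\E_\mu[\bar V-V_t\mid X_t]=0$ is the paper's Lemma~\ref{L:SUBSET} in conditional-expectation language. The gap is in the ``Eulerian form'' paragraph: the claim that $\Phi_i>0$ in the interior of every flat for a.e.\ $t$ is neither proved nor true, and the description of the exceptional set is wrong. Take $\mu=\LEB^1|_{[0,1]}$, $\bar X\equiv 0$, $\bar V(m)=\sin(4\pi m)$. Then for every $t>0$ the primitive $G_t(m):=\int_0^m(\bar X+t\bar V)=\tfrac{t}{4\pi}\bigl(1-\cos(4\pi m)\bigr)\ge 0$ with $G_t(0)=G_t(\tfrac12)=G_t(1)=0$, so $G_t^{**}\equiv 0$, $X_t\equiv 0$, the single flat is $(0,1)$, and $\Phi(s)=G_t(s)$ vanishes at the interior point $s=\tfrac12$. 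Your argument then only yields that $dV_t^\perp$ is supported on $\{0,\tfrac12,1\}$; a step function $-a\mathbf 1_{[0,1/2)}+a\mathbf 1_{[1/2,1]}$ is monotone, mean-zero, and compatible with this constraint, so you cannot conclude $V_t^\perp=0$. Note also that $\bar X+t\bar V=t\sin(4\pi m)$ is nowhere constant, so your proposed exceptional set is empty while the obstruction is present for all $t>0$.

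The paper's route is both shorter and avoids this issue entirely. You only exploit the one-sided limit $h_n\to 0^+$, obtaining $V_t\in\TAN_{X_t}\COPT$ and hence nondecreasing on each flat. But at a point of strong differentiability the \emph{left} difference quotients $(X_{t-h}-X_t)/h$, $h>0$, converge to $-V_t$; since $X_{t-h}\in\COPT$, this gives $-V_t\in\TAN_{X_t}\COPT$ as well. Thus $V_t\in\TAN_{X_t}\COPT\cap(-\TAN_{X_t}\COPT)$, and the same monotonicity-on-flats computation you already did (your difference-quotient remark) shows $V_t$ is simultaneously nondecreasing and nonincreasing on each level set of $X_t$, hence constant there. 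This is exactly Lemma~\ref{L:CONTAIN}; it replaces your entire $\Phi_i$ analysis with a one-line observation. Once $V_t\in\H_{X_t}$ is established this way, the rest of your argument (projection identity $V_t=\PROJ{\H_{X_t}}\bar V=\E_\mu[\bar V\mid X_t]$ and the weak formulation) is correct and essentially identical to the paper's.
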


%========== REMARK
\begin{remark}
A comment is in order on the role played by the reference measure
$\mu$. In Theorem~\ref{T:PGD} we assume the existence of a monotone
transport map $\bar{X} \in \COPT$ such that $\bar{X}\#\mu =
\bar{\RHO}$. Such a map may not exist (e.g.\ if the initial density
$\bar{\RHO}$ is a diffuse measure and the reference measure $\mu$ is
concentrated). In the following discussion, no particular properties
of $\mu$ are used. The theorem should hence be understood in the sense
that for any given $\bar{\RHO}$ there exist some reference measure
$\mu$ and a monotone transport map $\bar{X} \in \COPT$ with the
desired property $\bar{X}\#\mu = \bar{\RHO}$.
\end{remark}

Let us start with three lemmas.

%========== LEMMA
\begin{lemma}\label{L:CHR}
For any $X_t$ as in Theorem~\ref{T:PGD} we define 
\begin{equation}
\label{E:HT}
  \H_{X_t} := \text{$\L^2(\R,\mu)$-closure of $\big\{ \varphi\circ X_t \colon
    \varphi\in\D(\R) \big\}$}
\end{equation}
and $\RHO_t := X_t\#\mu$. Then the following statement is true: the
function $W\in\L^2(\R,\mu)$ is in $\H_{X_t}$ if and only if there
exists $w\in\L^2(\R, \RHO_t)$ such that $W=w\circ X_t$.
\end{lemma}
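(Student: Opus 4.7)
The plan is to prove the two implications separately, with the key tool being the push-forward change of variables formula
$$
  \int_\R f \circ X_t \,d\mu = \int_\R f \,d(X_t\#\mu) = \int_\R f \,d\RHO_t,
$$
valid for any nonnegative Borel function $f \colon \R \longrightarrow \R$, and by polarization/approximation for $f \in \L^2(\R,\RHO_t)$. In particular, applied to $f = |w_1-w_2|^2$ this gives
$$
  \|w_1\circ X_t - w_2\circ X_t\|_{\L^2(\R,\mu)}
    = \|w_1-w_2\|_{\L^2(\R,\RHO_t)},
$$
so the map $w \mapsto w\circ X_t$ is a linear isometry from $\L^2(\R,\RHO_t)$ into $\L^2(\R,\mu)$.

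For the ``if'' direction, suppose $W = w\circ X_t$ with $w\in\L^2(\R,\RHO_t)$. Since $\D(\R)$ is dense in $\L^2(\R,\RHO_t)$ (as $\RHO_t$ is a finite Borel measure on $\R$), pick a sequence $\varphi_n\in\D(\R)$ with $\varphi_n \longrightarrow w$ in $\L^2(\R,\RHO_t)$. The isometry above then gives $\varphi_n \circ X_t \longrightarrow w\circ X_t = W$ in $\L^2(\R,\mu)$, hence $W\in\H_{X_t}$.

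For the ``only if'' direction, let $W\in\H_{X_t}$ and choose $\varphi_n\in\D(\R)$ with $\varphi_n\circ X_t \longrightarrow W$ strongly in $\L^2(\R,\mu)$. In particular $\{\varphi_n\circ X_t\}$ is Cauchy in $\L^2(\R,\mu)$, and by the isometry the sequence $\{\varphi_n\}$ is Cauchy in $\L^2(\R,\RHO_t)$. Let $w\in\L^2(\R,\RHO_t)$ be its limit. Then $\varphi_n\circ X_t \longrightarrow w\circ X_t$ in $\L^2(\R,\mu)$ by another application of the isometry, and uniqueness of limits yields $W = w\circ X_t$ in $\L^2(\R,\mu)$.

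The only substantive point requiring care is the push-forward change of variables for $\L^2$ functions (as opposed to nonnegative Borel ones), but this follows at once by approximation of $w\in\L^2(\R,\RHO_t)$ by simple functions. There is no real obstacle here; the lemma is essentially the statement that pullback by the measurable map $X_t$ sets up an isometric embedding $\L^2(\R,\RHO_t) \hookrightarrow \L^2(\R,\mu)$ whose image is exactly $\H_{X_t}$, with density of $\D(\R)$ in $\L^2(\R,\RHO_t)$ identifying the image with the closure used to define $\H_{X_t}$.
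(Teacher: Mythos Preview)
Your proof is correct and follows essentially the same approach as the paper: both directions rest on the push-forward isometry $\|w\circ X_t\|_{\L^2(\R,\mu)} = \|w\|_{\L^2(\R,\RHO_t)}$, with the ``if'' direction using density of $\D(\R)$ in $\L^2(\R,\RHO_t)$ and the ``only if'' direction using a Cauchy-sequence/completeness argument. The only cosmetic difference is that you isolate the isometry up front, whereas the paper invokes the norm identity inline in each direction.
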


%========== PROOF
\begin{proof}
Let $W\in\H_{X_t}$ be given and consider a sequence of
$\varphi^k\in\D(\R)$ with
\begin{equation}\label{E:UNO}
  \varphi^k\circ X_t \longrightarrow W
  \quad\text{in $\L^2(\R,\mu)$,}
\end{equation}
so that the $\varphi^k\circ X_t$ form a Cauchy sequence in
$\L^2(\R,\mu)$. Since
$$
  \|\varphi^k\circ X_t\|_{\L^2(\R,\mu)} = \|\varphi^k\|_{\L^2(\R,\RHO_t)},
$$
the $\varphi^k$ then form a Cauchy sequence in $\L^2(\R,\RHO_t)$.
Hence there exists $w\in\L^2(\R,\RHO_t)$ with the property that
$\varphi^k \longrightarrow w$ in $\L^2(\R,\RHO_t)$, by completeness.
This implies
\begin{equation}\label{E:DUE}
  \varphi^k\circ X_t \longrightarrow w\circ X_t
  \quad\text{in $\L^2(\R,\mu)$.}
\end{equation}
Combining \eqref{E:UNO} and \eqref{E:DUE}, we obtain that $W=w\circ
X_t$.

For the converse direction, we recall that any finite Borel measure
$\nu$ on a locally compact Hausdorff space $\Omega$ with continuous
base is inner regular and so the space of all continuous functions
with compact support is dense in $\L^2(\Omega,\nu)$. We refer the
reader to \cite{Folland} for more information. If $\Omega$ is also a
vector space, the same statement is true for smooth functions with
compact support. Let $W=w\circ X_t$ be given and choose a sequence of
$\varphi^k \in \D(\R)$ such that $\varphi^k\longrightarrow w$ in
$\L^2(\R, \RHO_t)$. Since
$$
  \|W-\varphi^k\circ X_t\|_{\L^2(\R,\mu)}
    = \|w-\varphi^k\|_{\L^2(\R,\RHO_t)}
    \longrightarrow 0
$$
for $k\rightarrow\infty$, we conclude that $W\in\H_{X_t}$. This
finishes the proof.
\end{proof}

%========== LEMMA
\begin{lemma}\label{L:SUBSET}
Let $\H_{X_t}$ be defined as in Lemma~\ref{L:CHR}. Then $\H_{X_t}
\subset \mathbb{S}_{X_t}\COPT$, with
$$
  \mathbb{S}_{X_t}\COPT := \text{$\L^2(\R,\mu)$-closure of $T_{X_t}\COPT 
    \cap [(\bar{X}+t\bar{V})-X_t]^\perp$.}
$$
\end{lemma}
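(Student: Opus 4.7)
The plan is to show the stronger statement that every generator $\varphi\circ X_t$, with $\varphi\in\D(\R)$, already lies in $T_{X_t}\COPT \cap [(\bar{X}+t\bar{V})-X_t]^\perp$ itself; then taking $\L^2$-closure yields the inclusion. Throughout the argument I write $U := (\bar{X}+t\bar{V})-X_t$, and recall from \eqref{E:PROJ2} applied to the cone $\COPT$ that
\begin{equation*}
\langle U, X_t\rangle = 0,
\qquad
\langle U, Z\rangle \LS 0 \quad\text{for all $Z\in\COPT$.}
\end{equation*}

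The first step is membership in the tangent cone. Given $\varphi\in\D(\R)$, define $\zeta_s(x) := x + s\,\varphi(x)$ for small $s\in\R$. Since $\varphi$ has compact support and $\|\varphi'\|_\infty < \infty$, there exists $s_0>0$ such that for every $s$ with $|s|<s_0$, $\zeta_s$ is smooth, strictly increasing ($\zeta_s'=1+s\varphi' > 0$), and equal to the identity outside a fixed compact set. Lemma~\ref{L:SMOOTH} then gives $\zeta_s\circ X_t \in \COPT$. For $s\in(0,s_0)$ we have
\begin{equation*}
\frac{1}{s}\bigl(\zeta_s\circ X_t - X_t\bigr) = \varphi\circ X_t
\quad\in\quad \frac{1}{s}(\COPT - X_t) \subset T_{X_t}\COPT.
\end{equation*}

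The second step is orthogonality to $U$. Since $\zeta_s\circ X_t\in\COPT$ and $\zeta_{-s}\circ X_t\in\COPT$ for $0<s<s_0$, applying $\langle U,Z\rangle\LS 0$ and using $\langle U,X_t\rangle = 0$ yields
\begin{equation*}
s\,\langle U,\varphi\circ X_t\rangle
= \langle U,\zeta_s\circ X_t\rangle - \langle U, X_t\rangle \LS 0,
\end{equation*}
and similarly $-s\,\langle U,\varphi\circ X_t\rangle\LS 0$. Dividing by $s>0$ in each inequality forces $\langle U,\varphi\circ X_t\rangle = 0$. Combining the two steps, $\varphi\circ X_t \in T_{X_t}\COPT\cap[U]^\perp$ for every $\varphi\in\D(\R)$, so the $\L^2(\R,\mu)$-closure of this set of generators, namely $\H_{X_t}$, is contained in the $\L^2(\R,\mu)$-closure $\mathbb{S}_{X_t}\COPT$.

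There is no real obstacle: the only delicate point is choosing the perturbation $\zeta_s = \ID + s\varphi$ with $|s|$ small enough to remain strictly increasing, which is what lets me invoke Lemma~\ref{L:SMOOTH} in both directions $\pm s$ and extract the orthogonality relation from a one-sided variational inequality.
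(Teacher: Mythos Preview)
Your proof is correct and essentially identical to the paper's. The paper writes the perturbation as $Z^\pm_h = (\ID \pm \frac{1}{h}\varphi)\circ X_t$ with $h>\|\varphi'\|_{\L^\infty}$ (your $s=1/h$) and appeals to \eqref{E:CHAR} rather than the cone-specific form \eqref{E:PROJ2}, but the argument and the use of Lemma~\ref{L:SMOOTH} are the same.
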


%========== PROOF
\begin{proof}
By density, it is enough to show that $\varphi\circ X_t \in
T_{X_t}\COPT \cap [(\bar{X} +t\bar{V}) -X_t]^\perp$ for all
$\varphi\in\D(\R)$. We choose a constant $h >
\|\varphi'\|_{\L^\infty(\R)}$ and define
$$
  Z^\pm_h := \bigg( \ID\pm\frac{1}{h}\varphi \bigg) \circ X_t,
$$
which is a composition of a (smooth) monotone map with $X_t$, and thus
an element of the cone $\COPT$; see Lemma~\ref{L:SMOOTH}. Rearranging
terms, we obtain the inclusion
$$
  \varphi\circ X_t = h (Z^+_h-X_t) \in T_{X_t}\COPT.
$$
On the other hand, using the characterization \eqref{E:CHAR}, we find
that
$$
  \pm\big\langle (\bar{X}+t\bar{V})-X_t, \varphi\circ X_t \big\rangle
    = h \big\langle (\bar{X}+t\bar{V})-X_t, Z^\pm_h-X_t \big\rangle
    \LS 0,
$$
from which the orthogonality $\varphi\circ X_t \in [(\bar{X}+t\bar{V})
-X_t]^\perp$ follows.
\end{proof}

%========== LEMMA
\begin{lemma}\label{L:CONTAIN}
With the notation above, we have $V_t \in \H_{X_t}$ for a.e.\
$t\in\R$.
\end{lemma}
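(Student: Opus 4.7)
Plan proposal. The idea is to combine the two-sided strong differentiability of $t\mapsto X_t$ --- which holds for a.e.\ $t\in\R$ because this map is Lipschitz into the Hilbert space $\L^2(\R,\mu)$, as noted after~\eqref{E:LIPS} --- with the pointwise monotonicity of each $X_{t+h}\in\COPT$, in order to show that the limit $V_t$ is $\mu$-a.e.\ constant on every fiber of $X_t$. By Lemma~\ref{L:CHR} this is exactly the statement $V_t\in\H_{X_t}$.

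Fix $t$ at which the two-sided strong limit $V_t=\lim_{h\to 0}(X_{t+h}-X_t)/h$ exists in $\L^2(\R,\mu)$, and pick a nondecreasing representative of $X_t$. Its level sets $I_c:=X_t^{-1}(\{c\})$ are intervals, and only countably many of them carry positive $\mu$-mass. Fix any such fiber $I=I_c$; since $X_t\equiv c$ on $I$, one has
\begin{equation*}
V_t(h)|_I \;=\; \frac{X_{t+h}|_I - c}{h}\qquad\text{in $\L^2(I,\mu|_I)$.}
\end{equation*}
For $h>0$ the monotonicity of $X_{t+h}$ on $\R$ makes $V_t(h)|_I$ nondecreasing on $I$; for $h<0$ the extra sign flip from dividing by a negative number makes $V_t(h)|_I$ nonincreasing on $I$. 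Since both the cone of nondecreasing and the cone of nonincreasing functions are closed in $\L^2(I,\mu|_I)$ (extract a subsequence converging $\mu|_I$-a.e.\ and pass the pointwise monotonicity to the limit), the strong $\L^2$-limit $V_t|_I$ is simultaneously nondecreasing and nonincreasing on $I$, hence $\mu|_I$-a.e.\ equal to a constant.

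Outside the union of the countably many positive-$\mu$ fibers, the set of pairs $(m_1,m_2)$ with $X_t(m_1)=X_t(m_2)$ is contained in $\bigcup_c I_c\times I_c$ where $c$ ranges over zero-$\mu$ fibers, and thus has $\mu\otimes\mu$-measure zero; no extra constraint is imposed there. Combining this with the constantness on each positive-mass fiber, $V_t$ coincides $\mu$-a.e.\ with a $\sigma(X_t)$-measurable function, i.e.\ there exists $v_t\in\L^2(\R,\RHO_t)$ with $V_t=v_t\circ X_t$, and Lemma~\ref{L:CHR} delivers $V_t\in\H_{X_t}$. The only mildly technical ingredient is the $\L^2$-closure of the two monotone cones on each positive-mass fiber, which I do not expect to pose real difficulty; the rest of the argument uses nothing beyond the monotonicity built into $\COPT$ and the two-sided strong differentiability already established.
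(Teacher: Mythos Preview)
Your approach is correct and essentially the same as the paper's: both use two-sided strong differentiability to place $V_t$ in $\TAN_{X_t}\COPT\cap(-\TAN_{X_t}\COPT)$, then observe that on each level set of $X_t$ an element of $\TAN_{X_t}\COPT$ is nondecreasing (being a limit of monotone maps minus a constant), so that $V_t$ is simultaneously nondecreasing and nonincreasing---hence constant---on every fiber. The paper's write-up is somewhat more careful about representatives, working through supports of transport plans and Lemma~\ref{L:SUPPORT} rather than directly fixing nondecreasing representatives, and it organizes the bookkeeping by counting fibers with more than one element (the countable set $\mathscr{O}$) rather than fibers of positive $\mu$-mass; but the substance of the argument is identical.
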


%========== PROOF
\begin{proof}  Since the map $t\mapsto X_t$ is Lipschitz continuous, it is
(strongly) differentiable for a.e.\ $t\in\R$. Fix any such $t\in\R$.
Then the velocity $V_t := \dot{X}_t$ satisfies
$$
  V_t = \lim_{h\rightarrow 0+} \frac{X_{t+h}-X_t}{h}
    = -\lim_{h\rightarrow 0+} \frac{X_{t-h}-X_t}{h},
$$
which implies that $V_t \in \TAN_{X_t}\COPT \cap (-\TAN_{X_t}\COPT)$.
We now proceed in three steps. 
\medskip

{\bf Step~1.} Since $X_t\in\COPT$, the support of $\gamma_{X_t} :=
(\ID,X_t)\#\mu$ is a monotone subset of $\R\times\R$, which can be
extended to a maximal monotone set $\Gamma$; see
\cite{AlbertiAmbrosio}. We denote by $u$ the corresponding maximal
monotone set-valued map:
$$
  u(m) := \{ x\in\R \colon (m,x)\in\Gamma \}
  \quad\text{for $m\in\R$.}
$$
Then the level sets of $u$ are all closed intervals; see
\cite{AlbertiAmbrosio}. Consequently, there can be at most countably
many level sets of $u$ that contain more than one point.

Let now $N_{X_t}$ be the $\mu$-null set from Lemma~\ref{L:SUPPORT}
with the property that
$$
  (m,X_t(m)) \in \SPT\gamma_{X_t}
  \quad\text{for all $m\in\R\setminus N_{X_t}$.}
$$
For any $x\in\R$ we define the level set 
$$
  L_x := \Big\{ m\in\R\setminus N_{X_t} \colon X_t(m)=x \Big\}.
$$
Note that the $L_x$ are pairwise disjoint because $X_t$ is a function
(thus single-valued). Since every $L_x$ is contained in the
corresponding $x$-level set of $u$, the set
$$
  \mathscr{O} := \Big\{ x\in\R \colon 
    \text{$L_x$ has more than one element} \Big\}
$$
is at most countable. Then the map $X_t$ is invertible on $\R\setminus
\bigcup_{x\in\mathscr{O}} L_x$. 
\medskip

{\bf Step~2.} Consider any $W\in\TAN_{X_t}\COPT$. Then there exist
$Y^k\in\COPT$, $\lambda^k>0$ with
\begin{equation}
\label{E:STRO}
  W^k := Y^k-\lambda^k X_t \longrightarrow W
  \quad\text{in $\L^2(\R,\mu)$,}
\end{equation}
where we used that $\COPT$ is a cone. Extracting a subsequence if
necessary, we may assume that $W^k \longrightarrow W$ $\mu$-a.e. For
any $k\in\N$ we denote by $N^k$ the $\mu$-null set from
Lemma~\ref{L:SUPPORT}, corresponding to $Y^k \in \COPT$. Then there
exists a $B\subset\R$ with $\mu(B)=0$ such that $W^k(m)
\longrightarrow W(m)$ for every $m\in\R\setminus B$, and $B$ contains
all $N^k$. For any $x\in\mathscr{O}$ consider now $m_1,m_2\in
L_x\setminus B$ with $m_1\LS m_2$. For all $k\in\N$ we have
\begin{align}
  W^k(m_2)-W^k(m_1)
    &= \Big( \big( Y^k(m_2)-\lambda^k X_t(m_2) \big) 
      - \big( Y^k(m_1)-\lambda^k X_t(m_1) \big) \Big)
\nonumber\\
    &= Y^k(m_2)-Y^k(m_1)
\label{E:YS}
\end{align}
because $X_t(m_1)=X_t(m_2)=x$. Since $Y^k\in\COPT$ and $m_1,m_2\not\in
N^k$, we get
$$
  (m_2-m_1) \big( Y^k(m_2)-Y^k(m_1) \big)\GS 0,
$$
which implies a similar inequality for $W^k$. Passing to the limit
$k\rightarrow \infty$, we obtain the following statement: for all
$W\in\TAN_{X_t}\COPT$ and for all $x\in\mathscr{O}$, we have
$$
  (m_2-m_1) \big( W(m_2)-W(m_1) \big)\GS 0
$$
for $\mu$-a.e.\ $m_1,m_2\in L_x$. For $W\in-\TAN_{X_t}\COPT$ we have
the opposite inequality. 
\medskip

{\bf Step~3.} As shown above, the velocity $V_t \in \TAN_{X_t}\COPT
\cap (-\TAN_{X_t}\COPT)$. Then there exists a function $v_t\colon
\R\longrightarrow\R$ with $V_t = v_t\circ X_t$ $\mu$-a.e. Indeed for
all $x\in\R\setminus\mathscr{O}$ there exists at most one $m\in L_x$
such that $X_t(m)=x$, so we define
$$
  v_t(x) := \begin{cases} 
      V_t(m) & \text{if such an $m$ exists, and}
\\
      0 & \text{otherwise.}
    \end{cases}
$$
For all $x\in\mathscr{O}$ we can pick a generic $m\in L_x$ and define
$v_t(x) := V_t(m)$ because then $V_t$ is constant $\mu$-a.e.\ in
$L_x$, as shown in Step~2. By construction, we have
$$
  V_t(m) = v_t(X_t(m))
  \quad\text{for $\mu$-a.e.\ $m\in\R\setminus N_{X_t}$.}
$$
Using that $\mu(N_{X_t})=0$, we get the claim. Finally, since
$\|V_t\|_{\L^2(\R, \mu)} = \|v_t\|_{\L^2(\R, \RHO_t)}$ with $\RHO_t :=
X_t\#\mu$, we have that $v_t\in\L^2(\R,\RHO_t)$. Then we use
Lemma~\ref{L:CHR}.
\end{proof}

%========== PROOF
\begin{proof}[Proof of Theorem~\ref{T:PGD}]
Notice first that $\H_{X_t}$ is a closed subspace of $\L^2(\R,\mu)$,
and that the velocity $V_t$ is an element of that subspace for a.e.\
$t\in\R$; see Lemma~\ref{L:CONTAIN}. Combining Lemma~\ref{L:SUBSET}
with Theorem~\ref{T:HARAUX}, we obtain that $\langle \bar{V}-V_t, W
\rangle \LS 0$ for all $W\in\H_{X_t}$, which implies that $V_t$ is the
orthogonal projection of $\bar{V}$ onto $\H_{X_t}$.

There exists $v_t \in \L^2(\R,\RHO_t)$ with $\RHO_t := X_t\#\mu$, such
that $V_t = v_t\circ X_t$ $\mu$-a.e.; see the proof of
Lemma~\ref{L:CONTAIN}. Notice that since $\bar{X} \in \COPT$, by
assumption, we have $X_0 = \bar{X}$ and therefore $\RHO_0 = X_0\#\mu =
\bar{X}\#\mu = \bar{\RHO}$. For any $\varphi \in
\D([0,\infty)\times\R)$, we get
\begin{align*}
  -\int_\R \varphi(0,x) \,\bar{\RHO}(dx)
    & = - \int_\R \varphi(0,\bar{X}(m)) \,\mu(dm)    
\\
    & = \int_0^\infty \frac{d}{dt} \bigg( \int_\R \varphi(t,X_t(m)) \,\mu(dm) 
      \bigg) \,dt.
\end{align*}
Recall that the map $t\mapsto X_t$ is differentiable for a.e.\
$t\in\R$, with velocity $V_t = v_t\circ X_t$ where $v_t \in
\L^2(\R,\RHO_t)$. For a.e.\ $t\in\R$ we can therefore write
\begin{align*}
  & \frac{d}{dt} \bigg( \int_\R \varphi(t,X_t(m)) \,\mu(dm) \bigg)
\\
  & \qquad
    = \int_\R \Big( \partial_t\varphi(t,X_t(m))
      + V_t(m) \partial_x\varphi(t,X_t(m)) \Big) \,\mu(dm)
\\
  & \qquad
    = \int_\R \Big( \partial_t\varphi(t,X_t(m))
      + v_t(X_t(m)) \partial_x\varphi(t,X_t(m)) \Big) \,\mu(dm)
\\
  & \qquad
    = \int_\R \Big( \partial_x \varphi(t,x)
      + v_t(x) \partial_x\varphi(t,x) \Big) \,\RHO_t(dx).
\end{align*}
It follows that for all $\varphi \in \D([0,\infty)\times\R)$, we have
\begin{align*}
  -\int_\R \varphi(0,x) \,\bar{\RHO}(dx)
    & = \int_0^ \infty \int_\R \Big( \partial_x \varphi(t,x)
      + v_t(x) \partial_x\varphi(t,x) \Big) \,\RHO_t(dx) \,dt.
\end{align*}
This proves that $(\RHO_t,v_t)$ satisfies the continuity equation in
the distributional sense.

Similarly, for any test function $\zeta \in \D([0,\infty)\times\R)$,
we can write
\begin{align*}
  -\int_\R \zeta(0,x) \bar{v}(x) \,\bar{\RHO}(dx)
    & = -\int_\R \zeta(0,\bar{X}(m)) \bar{v}(\bar{X}(m)) \,\mu(dm)
\\
    & = -\int_\R \zeta(0,\bar{X}(m)) \bar{V}(m) \,\mu(dm)
\\
    & = \int_0^\infty \frac{d}{dt} \bigg( 
      \int_\R \zeta(t, X_t(m)) \bar{V}(m) \,\mu(dm) \bigg) \,dt.
\end{align*}
We used the fact that, by assumption, the initial velocity is of the
form $\bar{V} = \bar{v}\circ\bar{X}$ (thus $\bar{V} \in \H_{\bar{X}}$;
see Lemma~\ref{L:CHR}). For a.e.\ $t\in\R$, we can write
\begin{align*}
  & \frac{d}{dt} \bigg( \int_\R \zeta(t, X_t(m)) \bar{V}(m) 
    \,\mu(dm) \bigg)
\\
  & \qquad
    = \int_\R \Big( \partial_t\zeta(t,X_t(m))
      + V_t(m) \partial_x\zeta(t,X_t(m)) \Big) \bar{V}(m)\,\mu(dm)
\\
  & \qquad
    = \int_\R \Big( \partial_t\zeta(t,X_t(m))
      + v_t(X_t(m)) \partial_x\zeta(t,X_t(m)) \Big) \bar{V}(m)\,\mu(dm).
\end{align*}
Since $V_t$ is the orthogonal projection of $\bar{V}$ onto $\H_{X_t}$,
we get that
\begin{align*}
  & \int_\R \Big( \partial_t\zeta(t,X_t(m))
      + v_t(X_t(m)) \partial_x\zeta(t,X_t(m)) \Big) \bar{V}(m)\,\mu(dm)
\\
  & \qquad
    = \int_\R \Big( \partial_t\zeta(t,X_t(m))
      + v_t(X_t(m)) \partial_x\zeta(t,X_t(m)) \Big) V_t(m) \,\mu(dm)
\\
  & \qquad
    = \int_\R \Big( \partial_t\zeta(t,X_t(m))
      + v_t(X_t(m)) \partial_x\zeta(t,X_t(m)) \Big) v_t(X_t(m)) \,\mu(dm)
\\
  & \qquad
    = \int_\R \Big( \partial_t\zeta(t,x) 
      + v_t(x) \partial_x\zeta(t,x) \Big) v_t(x) \,\RHO_t(dx).
\end{align*}
using the fact that $\partial_t\zeta(t,X_t(m)) + v_t(X_t(m))
\partial_x\zeta(t,X_t(m))$ is an element of $\H_{X_t}$. It follows
that for all $\zeta \in \D([0,\infty)\times\R)$, we have
\begin{align*}
  -\int_\R \zeta(0,x) \bar{v}(x) \,\bar{\RHO}(dx)
    & = \int_0^ \infty \int_\R \Big( \partial_t \zeta(t,x)
      + v_t(x) \partial_x\zeta(t,x) \Big) v_t(x) \,\RHO_t(dx) \,dt.
\end{align*}
This proves that $(\RHO_t,v_t)$ satisfies the momentum equation
distributionally.
\end{proof}

%========== REMARK
\begin{remark}
Theorem~\ref{T:PGD} covers the special case $\mu=\bar{\RHO}$, for
which $(\bar{X}, \bar{V})=(\ID, \bar{v})$. More generally, if there
are two sets of initial data $(\bar{\RHO}_i,\bar{v}_i)$, $i=1..2$, and
if $(\bar{X}_i, \bar{V}_i)$ are the monotone transport maps and
initial velocities corresponding to the reference measure
$\mu\in\SP_2(\R)$, then the transport maps $X_{i,t}$ defined as in
\eqref{E:PRF} satisfy
$$
  \|X_{1,t}-X_{2,t}\|_{\L^2(\R,\mu)} 
    \LS \|\bar{X}_1-\bar{X}_2\|_{\L^2(\R,\mu)} 
      + |t| \|\bar{V}_1-\bar{V}_2\|_{\L^2(\R,\mu)}
$$
for all $t\in\R$ since the metric projection onto closed convex
subsets of Hilbert spaces is a contraction. In particular, this
implies the uniquness of the transport map $X_t$, from which the
uniqueness of the induced density $\RHO_t := X_t\#\mu$ follows. The
Eulerian velocities $v_t$ are determined by the orthogonal projection
of $\bar{V}$ onto the space $\H_{X_t}$, which is also unique. We
conclude that within the framework of solutions obtained from
\eqref{E:PRF}, we obtain both existence of solutions to
\eqref{E:PGDE}, and uniqueness.
\end{remark}

%%%%%%%%%%%%%%%%%%%%%%%%%%%%%%%%%%%%%%%%%%%%%%%%%%%%%%%%%%%%%%%%%%%%%%%%%%%%%%%%
%%%%%%%%%%%%%%%%%%%%%%%%%%%%%%%%%%%%%%%%%%%%%%%%%%%%%%%%%%%%%%%%%%%%%%%%%%%%%%%%
%%%%%%%%%%%%%%%%%%%%%%%%%%%%%%%%%%%%%%%%%%%%%%%%%%%%%%%%%%%%%%%%%%%%%%%%%%%%%%%%
%%%%%%%%%%%%%%%%%%%%%%%%%%%%%%%%%%%%%%%%%%%%%%%%%%%%%%%%%%%%%%%%%%%%%%%%%%%%%%%%
%%%%%%%%%%%%%%%%%%%%%%%%%%%%%%%%%%%%%%%%%%%%%%%%%%%%%%%%%%%%%%%%%%%% Force Field

\section{Force field}\label{S:FF}

The analysis above can also be applied to the case when the momentum
equation in \eqref{E:PGDE} is augmented with a force generated by the
fluid itself: We consider
\begin{equation}\label{E:PGDEF}
  \left.\begin{aligned}
    \partial_t\RHO + \partial_x(\RHO v) &= 0
\\
    \partial_t(\RHO v) + \partial_x(\RHO v^2) &= f[\RHO]
  \end{aligned}\right\}
  \quad\text{in $[0,\infty)\times\R$,}
\end{equation}
where $f \colon \SP_2(\R) \longrightarrow \SM(\R)$ is the force field,
with $\SM(\R)$ the space of finite, signed Borel measures. This
equation has been studied in \cite{BrenierGangboSavareWestdickenberg}
for a suitable class of forces. Instead of stating our result in the
general terms of \cite{BrenierGangboSavareWestdickenberg} (which is
possible), we focus here on the special case of the Euler-Poisson
system, for simplicity. Thus
\begin{equation}\label{E:EPCASE}
  f[\RHO] = -\RHO \, \partial_{x}  q_{\RHO},
  \quad\text{with $q_\RHO$ solution of $-\partial^{2}_{xx} q_{\RHO} = \RHO$.}
\end{equation}
As a first step, one needs to identify a {\em Lagrangian} force field
representing $f$. That means, we are looking for a functional $F
\colon \COPT \longrightarrow \L^2(\R,\mu)$ such that
$$
  \int_\R \varphi(x) \,f[\RHO](dx)
    = \int_\R \varphi(X(m)) F[X](m) \,\mu(dm)
  \quad\text{for all $\varphi \in \CB(\R)$,}
$$
where $\RHO \in \SP_{2}(\R)$ and $X \in \COPT$ are related by the
identity $X\# \mu = \RHO$. Whenever $f[\RHO]$ is absolute continuous
with respect to $\RHO$, and so can be expressed in terms of the
Radon-Nikodym density $f_\RHO \in \L^2(\R,\RHO)$ as $f[\RHO] = f_\RHO
\,\RHO$, one can choose
$$
  F[X] := f_\RHO \circ X
  \quad\text{for all $X \in \COPT$ with $X\#\mu = \RHO$.}   
$$
To every $X \in \C([0,\infty), \COPT)$ (with topology induced by the
$\L^{2}(\R,\mu)$-norm) we can associate a new variable playing the
role of a modified velocity:
\begin{equation}\label{E:YT}
  Y_t : = \bar{V} + \int_0^t F[X_s] \,ds
  \quad\text{for all $t \in [0,\infty)$.}
\end{equation}
The evolution $X_t$ for the system \eqref{E:PGDEF} is then
characterized by the identity
\begin{equation}\label{E:SOLUTION}
  X_t = \PROJ{\COPT}\bigg( \bar{X} + \int_0^t Y_s \,ds \bigg)
  \quad\text{for all $t \in [0,\infty)$.}
\end{equation}
The existence of a curve $X \in \LIP([0,\infty),\COPT)$ satisfying
\eqref{E:YT}/\eqref{E:SOLUTION} is especially easy to establish in the
Euler-Poisson case \eqref{E:EPCASE} (in the general case one can use a
fixed point argument as in \cite{BrenierGangboSavareWestdickenberg}):
We assume for simplicity that $\mu = \LEB^1|_{[0,1]}$. Then
$$
  f_\RHO(x) = -\frac{1}{2} \Big( m^-_\RHO(x) + m^+_\RHO(x) - 1 \Big)
  \quad\text{for all $x\in\R$}
$$
(see Example~6.10 in \cite{BrenierGangboSavareWestdickenberg}), where
$m^\pm_\RHO$ are cumulative distribution functions:
$$
  m^-_\RHO(x) := \RHO\big( (-\infty,x) \big),
  \quad
  m^+_\RHO(x) := \RHO\big( (-\infty,x] \big)
$$
for $x\in\R$. By construction, it then follows that
\begin{equation}\label{E:FORCEEP}
  F[X](m) = -\frac{1}{2} (2m - 1)
  \quad\text{for all $m \in [0,1]$.}
\end{equation}
In particular, the functional $F[X]$ does even not depend on
$X\in\COPT$ anymore, and so \eqref{E:YT} can be computed explicitly.
Global existence of $X$ satisfying \eqref{E:SOLUTION} follows, and
Lipschitz continuity in time is a consequence of the fact that the
metric projection $\PROJ{\COPT}$ is a contraction in $\L^2(\R,\mu)$.
To prove that \eqref{E:SOLUTION} generates a solution of
\eqref{E:PGDEF}, we need the following result, which remains true for
general $Y \in \C([0,\infty), \L^{2}(\R,\mu))$ defining $X$ through
\eqref{E:SOLUTION}, even those not necessarily given in terms of $X$.

%========== PROPOSITION
\begin{proposition}\label{P:FORCE}
Consider $\bar X \in \COPT$ and $Y \in \C([0,\infty), \L^{2}(\R,\mu))$
and define
$$
  X_t : = \PROJ{\COPT}\bigg( \bar{X} + \int_0^t Y_s \,ds \bigg)
  \quad\text{for all $t \in [0,\infty)$.}
$$
Then the derivative of $X_t$, denoted by $V_t$, exists and satisfies
\begin{equation}\label{E:SPEED}
  V_t = \PROJ{\H_{X_t}}(Y_t)
  \quad\text{for a.e.\ $t \in [0,\infty)$.}
\end{equation}
\end{proposition}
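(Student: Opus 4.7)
My plan is to adapt the proof of Theorem~\ref{T:PGD} almost verbatim, exploiting the fact that since $Y\in\C([0,\infty),\L^2(\R,\mu))$, the curve $Z_t := \bar{X} + \int_0^t Y_s\,ds$ is continuously differentiable in $\L^2(\R,\mu)$ with derivative $Y_t$. In particular $Z_t$ is locally Lipschitz, so the contractivity \eqref{E:CONTR} of $\PROJ{\COPT}$ makes $t\mapsto X_t$ locally Lipschitz and hence differentiable a.e. I fix such a differentiability point $t\in[0,\infty)$; for any sequence $h_n\rightarrow 0+$ the difference quotients $V_t(h_n) := (X_{t+h_n}-X_t)/h_n$ are uniformly bounded in $\L^2(\R,\mu)$, and every weak cluster point coincides with the strong derivative $V_t$.

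The core step is to rerun the argument of Theorem~\ref{T:HARAUX} with $\bar{V}$ replaced by $Y_t$ and $\bar{X}+t\bar{V}$ replaced by $Z_t$. The only new feature is the Taylor-type expansion
\begin{equation*}
  Z_{t+h_n} = Z_t + h_n Y_t + r(h_n),
  \qquad
  \|r(h_n)\|/h_n \longrightarrow 0,
\end{equation*}
which follows from the continuity of $s\mapsto Y_s$ in $\L^2(\R,\mu)$. Inserting this expansion into the projection characterization \eqref{E:CHAR} applied to $X_{t+h_n} = \PROJ{\COPT}(Z_{t+h_n})$, testing first against $Z = X_t$ and then against a general $Z\in\COPT$ satisfying $\langle Z_t-X_t, Z-X_t\rangle = 0$, and finally dividing by $h_n$ respectively $h_n^2$, yields (by exactly the same algebra as in \eqref{E:TRI} and the lines that follow)
\begin{equation*}
  V_t \in \TAN_{X_t}\COPT\cap[Z_t-X_t]^\perp,
  \quad
  \langle Y_t-V_t, V_t\rangle \GS 0,
  \quad
  \langle Y_t-V_t, W\rangle \LS 0
\end{equation*}
for all $W \in T_{X_t}\COPT \cap [Z_t-X_t]^\perp$. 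The remainder $r(h_n)$ produces only extra terms of the form $\langle r(h_n),V_t(h_n)\rangle$ and $\langle r(h_n), Z-X_t\rangle$ which are $o(h_n)$, so they wash out in the limits.

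With these conclusions in hand, Lemma~\ref{L:SUBSET} and Lemma~\ref{L:CONTAIN} apply unchanged: Lemma~\ref{L:SUBSET}, whose proof uses only the projection inequality for $X_t$ itself with the projected point $Z_t$ in place of $\bar{X}+t\bar{V}$, gives $\H_{X_t}\subset\mathbb{S}_{X_t}\COPT$, and the inequality $\langle Y_t-V_t,W\rangle\LS 0$ then extends from $T_{X_t}\COPT\cap[Z_t-X_t]^\perp$ to all of $\H_{X_t}$. Lemma~\ref{L:CONTAIN}, which uses only $V_t\in\TAN_{X_t}\COPT\cap(-\TAN_{X_t}\COPT)$ together with the maximal monotone structure of $\SPT\gamma_{X_t}$, gives $V_t\in\H_{X_t}$. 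Since $\H_{X_t}$ is a closed linear subspace of $\L^2(\R,\mu)$, replacing $W$ by $-W$ upgrades the inequality to $\langle Y_t-V_t,W\rangle = 0$ for all $W\in\H_{X_t}$. Combined with $V_t\in\H_{X_t}$, this is precisely the variational characterization $V_t = \PROJ{\H_{X_t}}(Y_t)$ asserted in \eqref{E:SPEED}.

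The main technical obstacle I anticipate is the bookkeeping of the remainder $r(h_n)$ in each of the three limiting inequalities produced by \eqref{E:CHAR}: one has to verify that the extra $\langle r(h_n),\cdot\rangle$-terms, after division by $h_n$ or $h_n^2$, vanish in the limit, which requires only the qualitative fact $\|r(h_n)\|/h_n\to 0$ (together with the uniform boundedness of $V_t(h_n)$). Apart from this, the argument is a direct transcription of the Section~\ref{S:SPS} proof, with the time-dependent datum $Y_t$ playing the role that $\bar{V}$ played there.
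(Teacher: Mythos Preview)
Your proposal is correct and follows essentially the same approach as the paper's own proof: both rerun Theorem~\ref{T:HARAUX} with $\bar{X}+t\bar{V}$ replaced by $Z_t=\bar{X}+\int_0^t Y_s\,ds$, then invoke Lemmas~\ref{L:SUBSET} and~\ref{L:CONTAIN} unchanged to conclude that $V_t\in\H_{X_t}$ and $\langle Y_t-V_t,W\rangle=0$ for all $W\in\H_{X_t}$. The only cosmetic difference is that you make the remainder $r(h_n)=\int_t^{t+h_n}(Y_s-Y_t)\,ds$ explicit via a Taylor expansion and track its $o(h_n)$ contribution through the three limits, whereas the paper simply instructs the reader to substitute $\int_0^{t+h_n}Y_s\,ds$ for $(t+h_n)\bar{V}$ ``word by word'' and leaves this bookkeeping implicit.
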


%========== PROOF
\begin{proof}
Each step of the proof follows the same ideas of the corresponding
result for the pressureless gas dynamics equations. We proceed in two
steps. 
\medskip

{\bf Step~1.} Repeating word by word the proof of Theorem
\ref{T:HARAUX}, replacing $(t+h_{n})\bar V$ by $\int_0^{t+h_n} Y(s)ds$
for all $t,h_n \in [0,\infty)$, one obtains the analogous results:
\begin{align*}
  & V_t \in \TAN_{X_t} \COPT \cap \bigg[ \bigg( \bar{X} 
    + \DST\int_{0}^{t}Y_s \,ds \bigg)-X_t \bigg]^\perp,
\\
  & \vphantom{\bigg]^\perp}
    \langle Y_t - V_t, V_t \rangle \GS 0,
\\
  & \langle Y_t - V_t, W \rangle \LS 0
  \quad\text{for all $W \in T_{X_t}\COPT \cap \bigg[ \bigg( \bar{X} 
    + \DST\int_{0}^{t}Y_s \,ds \bigg)-X_t \bigg]^\perp$.}
\end{align*}
\medskip

{\bf Step~2.} For any $t \in [0,\infty)$ for which $X_t$ is strongly
differentiable (these are a.e.\ $t$) we then deduce from Lemma
\ref{L:CONTAIN} that the velocity satisfies
$$
  V_t \in \TAN_{X_t}\COPT \cap (-\TAN_{X_t}\COPT) \subset \H_{X_t}. 
$$
Repeating the proof of Lemma~\ref{L:SUBSET}, replacing $t\bar V$ by
$\int_0^t Y_s \,ds$, one gets $\H_{X_t} \subset \mathbb{S}_{X_t}
\COPT$, where the definition of $\mathbb{S}_{X_t}\COPT$ changes
slightly from the previous one:
$$
  \mathbb{S}_{X_t}\COPT := \text{$\L^2(\R,\mu)$-closure of $T_{X_t}\COPT \cap 
    \bigg[ \bigg( \bar{X}+\int_0^t Y_s \,ds \bigg)-X_t \bigg]^\perp$.}
$$
We now deduce from Step~1. that 
$$
  \langle Y_t-V_t,V_t \rangle = 0,
  \quad
  \langle Y_t-V_t,W \rangle \LS 0
  \quad\text{for all $W \in \H_{X_t}$.}
$$
Since $V_t \in \H_{X_t}$, we obtain that $V_t$ is the orthogonal
projection of $Y_t$ onto $\H_{X_t}$.
\end{proof}

Applying Proposition~\ref{P:FORCE}, we can prove the following
existence result.

\begin{theorem}[Global Existence Euler-Poisson]\label{T:PGDFF}
Let initial data $\bar{\RHO}\in\SP_2(\R)$ and $\bar{v} \in \L^2(\R,
\bar{\RHO})$ be given. For suitable reference measure $\mu \in
\SP_2(\R)$, let $\bar{X} \in \COPT$ be the unique monotone transport
map with $\bar{X}\#\mu = \bar{\RHO}$. Define $\bar{V} := \bar{v} \circ
\bar{X}$, and let
\begin{equation}\label{E:PRFF}
   X_t := \PROJ{\COPT}\bigg( \bar{X} + \int_0^t Y_s \,ds \bigg),
  \quad
  Y_t = \bar{V} + \int_0^t F[X_s] \,ds
\end{equation}
for any $t\in\R$, where $F$ is given by \eqref{E:FORCEEP}. Then $X_t$
is strongly differentiable for a.e.\ $t \in [0,\infty)$ and $V_t :=
\dot{X}_t$ can be written in the following form: there exists a
velocity $v_t \in \L^2(\R,\RHO_t)$ with $\RHO_t := X_t\#\mu$, such
that $V_t = v_t \circ X_t$. The pair $(\RHO_t, v_t)$ is a weak
solution of the conservation law \eqref{E:PGDEF} with force
\eqref{E:EPCASE}.
\end{theorem}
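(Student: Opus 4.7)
The plan is to follow the structure of the proof of Theorem~\ref{T:PGD}, substituting Proposition~\ref{P:FORCE} for the direct projection argument used there. First I would apply Proposition~\ref{P:FORCE} with the curve $t \mapsto Y_t$ defined by \eqref{E:PRFF}; this yields that $t \mapsto X_t$ is strongly differentiable at a.e.\ $t \in [0,\infty)$ with $V_t = \PROJ{\H_{X_t}}(Y_t)$. In particular $V_t \in \H_{X_t}$, and Lemma~\ref{L:CHR} then produces the required Eulerian velocity $v_t \in \L^2(\R,\RHO_t)$ with $V_t = v_t \circ X_t$ $\mu$-a.e. The continuity equation follows verbatim from the computation in the proof of Theorem~\ref{T:PGD}, as that argument uses only the representation $V_t = v_t \circ X_t$ and the identity $\bar{X}\#\mu = \bar{\RHO}$.

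The genuinely new content is in the momentum equation. For $\zeta \in \D([0,\infty)\times\R)$ I would begin from
$$
  -\int_\R \zeta(0,x) \bar{v}(x) \,\bar{\RHO}(dx)
    = -\int_\R \zeta(0,\bar{X}(m)) Y_0(m) \,\mu(dm)
    = \int_0^\infty \frac{d}{dt}\bigg( \int_\R \zeta(t,X_t(m)) Y_t(m) \,\mu(dm) \bigg) dt,
$$
using $Y_0 = \bar{V} = \bar{v} \circ \bar{X}$. Since $Y_t$ is Lipschitz in $t$ with $\dot{Y}_t = F[X_t]$ (globally bounded, and in fact $s$-independent, by the explicit formula \eqref{E:FORCEEP}) and $t \mapsto X_t$ is Lipschitz into $\L^2(\R,\mu)$, differentiation under the integral yields
$$
  \int_\R \big( \partial_t\zeta(t,X_t) + V_t \,\partial_x\zeta(t,X_t) \big) Y_t \,d\mu
    + \int_\R \zeta(t,X_t) F[X_t] \,d\mu
$$
for a.e.\ $t$.

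The crucial step, which I expect to be the only real subtlety, is replacing $Y_t$ by $V_t = v_t \circ X_t$ in the first of the two integrals above. The factor $\big(\partial_t\zeta + v_t\,\partial_x\zeta\big)(t,X_t(\cdot))$ has the form $\psi \circ X_t$ with $\psi \in \L^2(\R,\RHO_t)$, hence lies in $\H_{X_t}$ by Lemma~\ref{L:CHR}; since $V_t$ is the orthogonal projection of $Y_t$ onto the closed subspace $\H_{X_t}$, the $\L^2(\R,\mu)$-inner product of $Y_t$ against any element of $\H_{X_t}$ coincides with that of $V_t$, and the substitution is legitimate. Pushing forward under $X_t\#\mu = \RHO_t$ then produces $\int_\R (\partial_t\zeta + v_t\partial_x\zeta) v_t \,\RHO_t(dx)$. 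The second integral is handled by the defining property of the Lagrangian representation $F$: with $\varphi = \zeta(t,\cdot)$ one has $\int_\R \zeta(t,X_t) F[X_t] \,d\mu = \int_\R \zeta(t,x) \,f[\RHO_t](dx)$. Integrating in time assembles these identities into the distributional momentum equation for \eqref{E:PGDEF} with the force \eqref{E:EPCASE}. The remaining tasks are routine: justifying differentiation under the integral sign (immediate from the Lipschitz bounds on $X_t$ and $Y_t$) and verifying integrability (via Cauchy--Schwarz, using that $\zeta$ is compactly supported and $Y_t \in \L^2(\R,\mu)$).
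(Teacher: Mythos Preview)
Your proposal is correct and is precisely the argument the paper's architecture is designed to support: the paper itself omits the proof of Theorem~\ref{T:PGDFF}, referring instead to \cite{BrenierGangboSavareWestdickenberg}, but Proposition~\ref{P:FORCE} is stated exactly so that the proof of Theorem~\ref{T:PGD} can be rerun with $Y_t$ in place of $\bar V$, and you have carried this out accurately. The one point worth making explicit is that the identification $\int_\R \zeta(t,X_t)\,F[X_t]\,d\mu = \int_\R \zeta(t,x)\,f[\RHO_t](dx)$ uses the defining property of the Lagrangian force displayed just before \eqref{E:YT}, and that the explicit formula \eqref{E:FORCEEP} (hence the Lipschitz bound on $Y_t$) presupposes $\mu = \LEB^1|_{[0,1]}$, which is the ``suitable reference measure'' in the statement.
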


The proof is omitted; see the proof of Theorem 3.5 in
\cite{BrenierGangboSavareWestdickenberg} for details.

%%%%%%%%%%%%%%%%%%%%%%%%%%%%%%%%%%%%%%%%%%%%%%%%%%%%%%%%%%%%%%%%%%%%%%%%%%%%%%%%
%%%%%%%%%%%%%%%%%%%%%%%%%%%%%%%%%%%%%%%%%%%%%%%%%%%%%%%%%%%%%%%%%%% Bibliography

\begin{bibdiv}
\begin{biblist}

\bib{AlbertiAmbrosio}{article}{ 
    AUTHOR = {Alberti, G.}, 
    AUTHOR = {Ambrosio, L.},
     TITLE = {A geometrical approach to monotone functions in 
              {${\bf R}\sp n$}},
   JOURNAL = {Math. Z.},
    VOLUME = {230}, 
      YEAR = {1999}, 
    NUMBER = {2}, 
     PAGES = {259--316}, 
}

\bib{AmbrosioGigliSavare}{book}{
    AUTHOR = {Ambrosio, L.},
    AUTHOR = {Gigli, N.},
    AUTHOR = {Savar\'{e}, G.},
     TITLE = {Gradient Flows in Metric Spaces and in the Space of
              Probability Measures},
    SERIES = {Lectures in Mathematics},
 PUBLISHER = {Birkh\"{a}user Verlag},
   ADDRESS = {Basel},
      YEAR = {2005},
}

\bib{BouchutJames1}{article}{
    AUTHOR = {Bouchut, F.},
    AUTHOR = {James, F.},
     TITLE = {\'{E}quations de transport unidimensionnelles \`a coefficients
              discontinus},
   JOURNAL = {C. R. Acad. Sci. Paris S\'er. I Math.},
    VOLUME = {320},
      YEAR = {1995},
    NUMBER = {9},
     PAGES = {1097--1102},
}
	
\bib{BouchutJames2}{article}{
    AUTHOR = {Bouchut, F.},
    AUTHOR = {James, F.},
     TITLE = {Duality solutions for pressureless gases, monotone scalar
              conservation laws, and uniqueness},
   JOURNAL = {Comm. Partial Differential Equations},
    VOLUME = {24},
      YEAR = {1999},
    NUMBER = {11-12},
     PAGES = {2173--2189},
}

\bib{BrenierGangboSavareWestdickenberg}{article}{
    AUTHOR = {Brenier, Y.},
    AUTHOR = {Gangbo, W.},
    AUTHOR = {Savar{\'e}, G.},
    AUTHOR = {Westdickenberg, M.},
     TITLE = {Sticky particle dynamics with interactions},
   JOURNAL = {J. Math. Pures Appl. (9)},
    VOLUME = {99},
      YEAR = {2013},
    NUMBER = {5},
     PAGES = {577--617},
}

\bib{BrenierGrenier}{article}{
    AUTHOR = {Brenier, Y.},
    AUTHOR = {Grenier, E.},
     TITLE = {Sticky particles and scalar conservation laws},
   JOURNAL = {SIAM J. Numer. Anal.},
    VOLUME = {35},
      YEAR = {1998},
    NUMBER = {6},
     PAGES = {2317--2328 (electronic)},
}

\bib{Brezis}{book}{
    AUTHOR = {Br{\'e}zis, H.},
     TITLE = {Op\'erateurs maximaux monotones et semi-groupes de
              contractions dans les espaces de {H}ilbert},
      NOTE = {North-Holland Mathematics Studies, No. 5. Notas de
              Matem{\'a}tica (50)},
 PUBLISHER = {North-Holland Publishing Co.},
   ADDRESS = {Amsterdam},
      YEAR = {1973},
}

\bib{ERykovSinai}{article}{
    AUTHOR = {E, W.},
    AUTHOR = {Rykov, Yu. G.},
    AUTHOR = {Sinai, Ya. G.},
     TITLE = {Generalized variational principles, global weak solutions and
              behavior with random initial data for systems of conservation
              laws arising in adhesion particle dynamics},
   JOURNAL = {Comm. Math. Phys.},
    VOLUME = {177},
      YEAR = {1996},
    NUMBER = {2},
     PAGES = {349--380},
}

\bib{Folland}{book}{
    AUTHOR = {Folland, G. B.},
     TITLE = {Real analysis},
    SERIES = {Pure and Applied Mathematics (New York)},
   EDITION = {Second},
      NOTE = {Modern techniques and their applications,
              A Wiley-Interscience Publication},
 PUBLISHER = {John Wiley \& Sons Inc.},
   ADDRESS = {New York},
      YEAR = {1999},
}
	
\bib{GangboNguyenTudorascu}{article}{
    AUTHOR = {Gangbo, W.},
    AUTHOR = {Nguyen, T.},
    AUTHOR = {Tudorascu, A.},
     TITLE = {Euler-{P}oisson systems as action-minimizing paths in the
              {W}asserstein space},
   JOURNAL = {Arch. Ration. Mech. Anal.},
    VOLUME = {192},
      YEAR = {2009},
    NUMBER = {3},
     PAGES = {419--452},
}

\bib{Grenier}{article}{
    AUTHOR = {Grenier, E.},
     TITLE = {Existence globale pour le syst\`eme des gaz sans pression},
   JOURNAL = {C. R. Acad. Sci. Paris S\'er. I Math.},
    VOLUME = {321},
      YEAR = {1995},
    NUMBER = {2},
     PAGES = {171--174},
}
	
\bib{Haraux}{article}{
    AUTHOR = {Haraux, A.},
     TITLE = {How to differentiate the projection on a convex set in
              {H}ilbert space. {S}ome applications to variational
              inequalities},
   JOURNAL = {J. Math. Soc. Japan},
    VOLUME = {29},
      YEAR = {1977},
    NUMBER = {4},
     PAGES = {615--631},
}

\bib{HuangWang}{article}{
    AUTHOR = {Huang, F.},
    AUTHOR = {Wang, Z.},
     TITLE = {Well posedness for pressureless flow},
   JOURNAL = {Comm. Math. Phys.},
    VOLUME = {222},
      YEAR = {2001},
    NUMBER = {1},
     PAGES = {117--146},
}

\bib{Moutsinga}{article}{
    AUTHOR = {Moutsinga, O.},
     TITLE = {Convex hulls, sticky particle dynamics and pressure-less gas
              system},
   JOURNAL = {Ann. Math. Blaise Pascal},
    VOLUME = {15},
      YEAR = {2008},
    NUMBER = {1},
     PAGES = {57--80},
}

\bib{NatileSavare}{article}{
    AUTHOR = {Natile, L.},
    AUTHOR = {Savar{\'e}, G.},
     TITLE = {A {W}asserstein approach to the one-dimensional sticky
              particle system},
   JOURNAL = {SIAM J. Math. Anal.},
    VOLUME = {41},
      YEAR = {2009},
    NUMBER = {4},
     PAGES = {1340--1365},
}

\bib{NguyenTudorascu}{article}{
    AUTHOR = {Nguyen, T.},
    AUTHOR = {Tudorascu, A.},
     TITLE = {Pressureless {E}uler/{E}uler-{P}oisson systems via adhesion
              dynamics and scalar conservation laws},
   JOURNAL = {SIAM J. Math. Anal.},
    VOLUME = {40},
      YEAR = {2008},
    NUMBER = {2},
     PAGES = {754--775},
}

\bib{PoupaudRascle}{article}{
    AUTHOR = {Poupaud, F.},
    AUTHOR = {Rascle, M.},
     TITLE = {Measure solutions to the linear multi-dimensional transport
              equation with non-smooth coefficients},
   JOURNAL = {Comm. Partial Differential Equations},
    VOLUME = {22},
      YEAR = {1997},
    NUMBER = {1-2},
     PAGES = {337--358},
}

\bib{Wolansky}{article}{
    AUTHOR = {Wolansky, G.},
     TITLE = {Dynamics of a system of sticking particles of finite size on
              the line},
   JOURNAL = {Nonlinearity},
    VOLUME = {20},
      YEAR = {2007},
    NUMBER = {9},
     PAGES = {2175--2189},
}

\bib{Zarantonello}{article}{
    AUTHOR = {Zarantonello, E. H.},
     TITLE = {Projections on convex sets in Hilbert space and
              spectral theory. I. Projections on convex sets},
CONFERENCE = {
     TITLE = {Contributions to nonlinear functional analysis (Proc.
              Sympos., Math. Res. Center, Madison, WI, 1971)},
             },
      BOOK = {
 PUBLISHER = {Academic Press},
     PLACE = {New York},
             },
      DATE = {1971},
     PAGES = {237--341},
}

\bib{Zeldovich}{article}{
    AUTHOR = {Zel'dovich, Ya. B.},
     TITLE = {Gravitational instability: An approximate theory for 
              large density perturbations},
   JOURNAL = {Astro. Astrophys.},
    VOLUME = {5},
      YEAR = {1970},
     PAGES = {84--89},
}
  
\end{biblist}
\end{bibdiv}

\end{document}